\newtheorem{thm}{Theorem}[section]
\newtheorem{lem}[thm]{Lemma}
\newtheorem{conj}[thm]{Conjecture}
\newtheorem{sublemma}{}[thm]
\theoremstyle{definition}
\newtheorem{defn}[thm]{Definition}
\date{\today}
\title{Binary normal networks without near reticulations can be reconstructed from their rooted triples}
\author{Andrew Francis}
\address{School of Mathematics and Statistics, University of New South Wales, Australia}
\email{a.francis@unsw.edu.au}
\author{Charles Semple}
\address{School of Mathematics and Statistics, University of Canterbury, New Zealand}
\email{charles.semple@canterbury.ac.nz}
\thanks{The first author is supported by the Australian Research Council, DP260102678. The second author was supported by the New Zealand Marsden Fund.}
\keywords{Phylogenetic networks, normal networks, rooted triples}
\begin{document}

\begin{abstract}
Normal networks are an important class of phylogenetic networks that have compelling mathematical properties which align with intuition about inference from genetic data.  While tools enabling widespread use of phylogenetic networks in the biological literature are still under mathematical, statistical, and computational development, many such results are being assembled, and in particular for normal phylogenetic networks.
For instance, it has been shown that binary normal networks can be reconstructed from the sets of three- and four-leaf rooted phylogenetic trees that they display.  It is also known that one can reconstruct particular \emph{subclasses} of normal networks from just the displayed rooted triples.  This applies, for instance, to rooted binary phylogenetic trees and to binary level-$1$ normal networks. In this paper we address the question of how much of the class of binary normal networks can be reconstructed from just the rooted triples that they display. We find that all except those with substructures that we call ``near-sibling reticulations'' and ``near-stack reticulations'' can be reconstructed just from their rooted triples.  This goes some way to answering the natural question of how much information can be extracted from a set of displayed rooted triples, which are arguably the simplest substructure that one may hope for in a phylogenetic object.
\end{abstract}

\maketitle

\section{Introduction}

A significant challenge for the widespread adoption of phylogenetic networks for biological inference is the difficulty in reconstructing them from available data.  Such data typically includes alignments of genetic sequence, either at the whole genome level or, more commonly, at the gene level.  Such alignments can readily produce small substructures such as a rooted triple. Much of the difficulty lies in combining these substructures in a meaningful way. We can always reconstruct a phylogenetic network that infers, for example, each of the rooted triples in a given collection by simply choosing a network that infers {\em all} possible rooted triples. But this is neither meaningful nor informative. Thus we need to constrain the phylogenetic network that we reconstruct. One way to do this is to restrict the reconstructed network to a certain class. This is not unusual, as most current reconstruction methods in computational biology restrict to the class of phylogenetic trees. Furthermore, for the purposes of consistency, the substructures that we use for reconstruction need to have the property of being able to determine the phylogenetic networks of interest.

In the last twenty years, a wild assortment of classes of phylogenetic networks have been introduced and studied. Amongst these classes, normal and tree-child networks have arguably been the most prominent. Introduced by Willson~\cite{willson2008reconstruction, willson2010properties-of-n} and Cardona et al.~\cite{cardona2009comparison}, respectively, these two classes of phylogenetic networks are sufficiently complex to capture reticulate evolution but also sufficiently constrained to be meaningful. They also have the compelling biological property of ``visibility'', which allows the present to `see' all past speciation and reticulation events from extant species. However, of these two classes, normal networks are more adaptable to reconstruction~\cite{francis2025normal}. For example, unlike tree-child networks, normal networks are reconstructable from the small subtrees that they infer~\cite{linz2020caterpillars}. The purpose of this paper is to investigate this adaptability in the context of rooted triples, the first step in establishing a general method for taking a given collection of rooted triples and outputting a normal network that is representative of the initial data.

For a binary phylogenetic network $N$, let $R(N)$ denote the set of rooted triples displayed by $N$. It is well known that if $T$ is a rooted binary phylogenetic tree, then $R(T)$ determines $T$ (see, for example, \cite{semple2003phylogenetics}). In particular, we have the following theorem.

\begin{thm}
Let $T_1$ and $T_2$ be two rooted binary phylogenetic $X$-trees. Then $R(T_1)=R(T_2)$ if and only if $T_1\cong T_2$.
\label{2trees}
\end{thm}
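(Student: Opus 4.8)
The plan is to dispose of the forward direction (if $T_1\cong T_2$ then $R(T_1)=R(T_2)$) directly from the definitions, since ``$N$ displays the rooted triple $ab|c$'' is a property invariant under isomorphism, and then to concentrate on the converse, which is the substantive claim. For the converse I would argue by induction on $n=|X|$. The base cases $n\le 2$ are immediate: up to isomorphism there is a unique rooted binary phylogenetic $X$-tree and the displayed triple set is empty, so there is nothing to prove (and already at $n=3$ the single displayed triple visibly pins down the tree).

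The crux of the inductive step is to show that the common triple set $R:=R(T_1)=R(T_2)$ determines the bipartition $\{A,B\}$ of $X$ induced by the two children of the root. To recover this bipartition I would use the standard Aho-type graph $G$ whose vertex set is $X$ and in which $\{a,b\}$ is an edge precisely when there is some $c\in X$ with $ab|c\in R$. The key lemma is that the connected components of $G$ are exactly the blocks $A$ and $B$ of the root bipartition common to each $T_i$. This rests on two facts about the most-recent-common-ancestor characterisation of displayed triples. First, for any two distinct leaves $a,a'$ lying in the same block, choosing any leaf $c$ in the opposite block (which is nonempty, since the root has two children) gives a displayed triple $aa'|c$, because the most recent common ancestor of $a$ and $a'$ is a strict descendant of the root while that of $a,a',c$ is the root itself; hence each block is a clique of $G$. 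Second, no edge of $G$ joins a leaf of $A$ to a leaf of $B$, since for $a\in A$ and $b\in B$ the most recent common ancestor of $a$ and $b$ is already the root, so it cannot be a strict descendant of the most recent common ancestor of $a,b,c$ for any $c$, and therefore no triple $ab|c$ is displayed. Together these show that $G$ has precisely the two connected components $A$ and $B$.

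Because $G$ depends only on $R$, and $R(T_1)=R(T_2)$, both trees share the same root bipartition $\{A,B\}$. I would then pass to the restrictions $T_i|A$ and $T_i|B$. Since the displayed triples of $T_i|A$ are exactly those triples of $R$ all of whose leaves lie in $A$, and likewise for $B$, we obtain $R(T_1|A)=R(T_2|A)$ and $R(T_1|B)=R(T_2|B)$. As $A$ and $B$ are each proper subsets of $X$, the inductive hypothesis yields $T_1|A\cong T_2|A$ and $T_1|B\cong T_2|B$. Finally, each $T_i$ is reconstituted by attaching the two pendant subtrees on $A$ and $B$ to a common root, so these two isomorphisms combine into an isomorphism $T_1\cong T_2$, completing the induction.

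I expect the main obstacle to be the key lemma identifying the connected components of $G$ with the root bipartition; once the most-recent-common-ancestor bookkeeping there is handled carefully, including the degenerate case in which a block is a single leaf and so forms a trivial component, the restriction-and-reassembly argument is routine.
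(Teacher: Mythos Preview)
Your argument is correct. Note, however, that the paper does not supply its own proof of Theorem~\ref{2trees}: it is quoted as a well-known fact with a pointer to~\cite{semple2003phylogenetics} and the remark that it is ``not difficult to prove''. The approach you outline---building the Aho--Sagiv--Szymanski--Ullman graph on $X$, showing its connected components coincide with the two clusters at the root, and then recursing on each side---is exactly the standard textbook proof one finds in that reference, so there is nothing to contrast. Your handling of the degenerate case where one side of the root bipartition is a singleton is fine: such a block is an isolated vertex of the graph and the restricted tree on it is a single leaf, so the induction goes through.
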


While not difficult to prove, Theorem~\ref{2trees} underlies many so-called supertree methods. These methods take as input a collection (of not necessarily compatible) rooted phylogenetic trees on overlapping leaf sets and output a single rooted phylogenetic tree that ``best'' represents the input collection. For an excellent overview of supertree methods, see~\cite{bininda2004evolution}.  Furthermore, Theorem~\ref{2trees} justifies the use of rooted triples in the input collection, as the ``true'' tree (at least theoretically) is recoverable from the rooted triples it displays. More generally than trees, if $N$ is a binary level-$1$ normal network, then $R(N)$ determines $N$~\cite{Gambette2012}. However, not surprisingly, if $N$ is an arbitrary binary normal network, then $N$ cannot typically be determined by $R(N)$ alone (see, for example, the counterexample given by Lemma~\ref{l:near-sib.retics.same.triples}).  

While rooted triples alone are insufficient, it has recently been shown that displayed trees with at most four leaves are nevertheless sufficient for determining an arbitrary binary normal network. Let $Q(N)$ denote the set of $4$-leaf caterpillars, that is, rooted binary phylogenetic trees on $4$ leaves with exactly one cherry, displayed by a binary normal network $N$. The next theorem is established in~\cite[Theorem~2]{linz2020caterpillars}.
 
\begin{thm}
Let $N_1$ and $N_2$ be two binary normal networks on $X$. Then $R(N_1)=R(N_2)$ and $Q(N_1)=Q(N_2)$ if and only if $N_1\cong N_2$.
\label{quads}
\end{thm}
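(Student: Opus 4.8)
The plan is to prove the nontrivial (forward) direction: assuming $R(N_1)=R(N_2)$ and $Q(N_1)=Q(N_2)$, show $N_1\cong N_2$; the converse is immediate, since isomorphic networks display the same triples and caterpillars. I would argue by induction on the total number of vertices (equivalently, on $|X|$ plus the number of reticulations), with the base case handled either directly on small networks or, when $N_1$ and $N_2$ happen to be trees, by Theorem~\ref{2trees}.

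The structural engine of the induction is the fact that every binary tree-child network on at least two leaves --- and hence every binary normal network --- contains either a cherry (two leaves sharing a tree-vertex parent) or a reticulated cherry (a leaf $a$ with tree-vertex parent $p$, a leaf $b$ whose parent $q$ is a reticulation, and an arc from $p$ to $q$). My first task is to show that the set of cherries of $N$ is determined by $R(N)$: a pair $\{a,b\}$ is a cherry precisely when $ab|c\in R(N)$ for every $c\in X\setminus\{a,b\}$ while neither $ac|b$ nor $bc|a$ lies in $R(N)$, i.e.\ $a$ and $b$ are inseparable across all displayed triples. If a cherry exists, I delete one of its leaves and suppress the resulting degree-two vertex; since this produces a smaller normal network and the common cherry set of $N_1$ and $N_2$ forces the same reduction, it then suffices to verify that $R$ and $Q$ of the reduced networks still agree and to invoke the inductive hypothesis.

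The heart of the argument is the reticulated-cherry case (when no cherry exists). Here I would use $R(N)$ together with $Q(N)$ to (i) detect a reticulated cherry $\{a,b\}$ --- recognizable from the asymmetry that $a,b$ can be grouped together yet only one of them, say $b$, can ever be separated from the other to join further leaves, reflecting that $a$ sits on the tree side and $b$ hangs off the reticulation $q$ --- and (ii), crucially, to locate the second parent of $q$, that is, to determine where in $N$ the leaf $b$ is drawn when the arc $(p,q)$ is not used. Detecting (i) is essentially triple-level information, but (ii) is exactly where the caterpillars in $Q(N)$ become indispensable: the rooted triples alone cannot pin down the placement of the reticulation's second endpoint, which is precisely the ambiguity responsible for the triple-only counterexample of Lemma~\ref{l:near-sib.retics.same.triples} and the near-sibling and near-stack obstructions studied in this paper. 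Having identified the reticulated cherry together with the location of the second parent, I cut the arc $(p,q)$, suppress the two resulting degree-two vertices, and obtain a normal network with one fewer reticulation.

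The main obstacle, and the step demanding the most care, is a ``reflection'' lemma asserting that the reductions above are both consistent and hypothesis-preserving: that $R(N_1)=R(N_2)$ and $Q(N_1)=Q(N_2)$ force $N_1$ and $N_2$ to admit the same reduction, and that the triple and caterpillar sets of the reduced networks are themselves equal (ideally expressible purely in terms of $R(N)$, $Q(N)$, and the reduction performed). This requires a delicate analysis of how cutting a reticulation arc changes which triples and caterpillars are displayed, complicated by interactions with other nearby reticulations, and it is exactly here that normality (no shortcuts, plus tree-child) must be exploited to keep the bookkeeping finite and local. Once this lemma is in place, the inductive hypothesis yields $N_1'\cong N_2'$, and lifting the isomorphism back across the identically performed reduction gives $N_1\cong N_2$, completing the proof.
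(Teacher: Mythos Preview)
The paper does not itself prove Theorem~\ref{quads}; it is quoted from \cite[Theorem~2]{linz2020caterpillars}, so there is no proof here to compare against directly. That said, the paper states that its proof of Theorem~\ref{main} ``follows the model of the proof of Theorem~\ref{quads}'', and your outline matches that model closely: induction driven by Lemma~\ref{cherries} (a cherry or reticulated cherry always exists), cherries recognised from $R(N)$ alone via Lemma~\ref{l:cherry}, and the reticulated-cherry case handled by a recognition lemma in which the extra data ($Q(N)$ in Linz--Semple, the no-near-reticulation hypothesis here via Lemma~\ref{l:retic.cherry}) pins down the second attachment point of the reticulation.

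One divergence worth noting. Your reticulated-cherry reduction cuts the arc $(p,q)$ and keeps all leaves, dropping one reticulation. The paper's model (see the proof of Theorem~\ref{main}) instead deletes the reticulation leaf $b$ together with its reticulation parent, reducing $|X|$ by one; it does not locate the second parent \emph{before} reducing but rather records the visibility set $V_{g_b}$ of the other grandparent, applies induction, and then argues there is a unique arc of the reduced network above which $b$ can be re-attached (sublemma~\ref{most2} and the paragraph following it). Both routes are viable, but they place the uniqueness burden in different places, and your ``reflection'' lemma about $R$ and $Q$ of the reduced networks would have somewhat different content under your cut than under the paper's deletion. You also leave as a black box precisely how $Q(N)$ is used to identify the second parent; that is the real technical content (the $Q$-analogue of Lemma~\ref{l:retic.cherry}), and your sketch is correct at the level you give it but does not engage with that step.
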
 

Using an explicit example, it is shown in \cite{linz2020caterpillars} that the inclusion of $4$-leaf caterpillars is necessary in the statement of Theorem~\ref{quads}. This begs the question \emph{to what extent} are such caterpillars necessary? In this paper, we consider this question. More particularly, we investigate \emph{which binary normal networks are determined by the set of rooted triples they display}? 

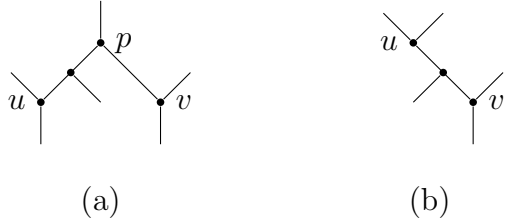
\begin{figure}[ht]
        
\begin{tikzpicture}[ 
   sdot/.style={circle,fill,radius=1pt,inner sep=1pt}] 

\node[sdot,label={left:{$u$}}] (r1) {}; 
\node[above right=1cm of r1,sdot,label={right:{$p$}}] (parent) {};
\node[below right=1cm of parent,sdot,label={right:{$v$}}] (r2) {};
\node[above=5mm of parent] (gp) {};
\node[above right=5mm of r2] (r2p) {};
\node[above left=5mm of r1] (r1p) {};
\node[below=5mm of r1] (r1c) {};
\node[below=5mm of r2] (r2c) {};

\draw (r1) to node[pos=0.5,sdot] (t1) {} (parent);
\node[below right=5mm of t1] (t1c) {};
\draw (gp)--(parent)--(r2)--(r2c) (r1p)--(r1)--(r1c) (r2p)--(r2) (t1)--(t1c);

\node[below=17mm of parent] (a) {(a)};

\node[sdot,right=4cm of r2,label={right:{$v$}}] (r2) {};
\node[sdot,above left=1cm of r2,label={left:{$u$}}] (r1) {};
\node[above left=5mm of r1] (r1p1) {};
\node[above right=5mm of r1] (r1p2) {};
\node[above right=5mm of r2] (r2p) {};
\node[below=5mm of r2] (r2c) {};

\draw (r1p1)--(r1)--(r1p2);
\draw (r1) to node[pos=0.5,sdot] (t1) {} (r2);
\node[below left=5mm of t1] (t1c) {};
\draw (t1)--(t1c);
\draw (r2p)--(r2)--(r2c);

\node[right=35mm of a] () {(b)};

\end{tikzpicture}
\caption{The reticulate vertices $u$ and $v$ in each figure are near-reticulations. In (a) they are near-sibling reticulations, where the vertex $p$ is a parent of $v$ and a grandparent of $u$, and in (b) they are near-stack reticulations. 
}
\label{f:near-sibs}
\end{figure}

Let $u$ and $v$ be reticulations of a binary phylogenetic network $N$. If there is a tree vertex in $N$ such that one of its children is a tree vertex, and this tree vertex is a parent of $u$, and the other child is $v$, then we say that $u$ and $v$ are {\em near-sibling reticulations}. Furthermore, if the unique child of $u$ is a tree vertex and this tree vertex is a parent of $v$, then we say that $u$ and $v$ are {\em near-stack reticulations}. These concepts are illustrated in Figure~\ref{f:near-sibs}. If $N$ has no near-sibling and no near-stack reticulations, we say $N$ has no {\em near reticulations}. Note that binary level-$1$ normal networks have no near reticulations. The main result of this paper is the following theorem.

\begin{thm}
Let $N_1$ and $N_2$ be two binary normal networks on $X$ with no near reticulations. Then $R(N_1)=R(N_2)$ if and only if $N_1\cong N_2$.
\label{main}
\end{thm}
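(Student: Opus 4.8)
The plan is to prove the harder direction—that $R(N_1)=R(N_2)$ forces $N_1 \cong N_2$—by induction, and the natural inductive parameter is the number of reticulations. The base case where both networks are trees is exactly Theorem~\ref{2trees}. For the inductive step, the key idea I would pursue is that the displayed rooted triples $R(N)$ must encode enough structural information to locate a reticulation that can be ``undone'' in a way compatible with both networks simultaneously. Concretely, I would look for a \emph{lowest} reticulation $v$ in $N_1$, that is, a reticulation such that all its descendants are tree vertices (and hence form a pendant subtree below it). The absence of near reticulations should guarantee that such a lowest reticulation is well-behaved: its child is a tree vertex whose own descendants form a simple pendant structure, and neither parent of $v$ can be a reticulation (stack) nor positioned as a near-sibling, so the local neighborhood of $v$ is forced.

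The central step is to show that the rooted triples \emph{detect} such a lowest reticulation and pin down its two parents' positions relative to the rest of the leaf set. Here is where I would exploit the normal network hypothesis heavily: normal networks are tree-child and have no redundant arcs, so every reticulation is ``visible'' and each arc leaves a trace in the set of displayed trees. The strategy is to identify, from $R(N_1)$ alone, a pair of leaves (or small leaf sets hanging off the two parents of $v$ and below $v$) whose pattern of rooted triples is only explicable by a reticulation in that exact configuration. Because the networks have no near reticulations, the two parents of a lowest reticulation $v$ are tree vertices that are not adjacent to each other except through $v$, and the leaf descendants on either side will display conflicting triples (triples that agree with $v$ hanging on the left parent and triples that agree with $v$ hanging on the right parent) in a way a tree cannot. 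I would argue this conflict signature is identical in $N_1$ and $N_2$ since $R(N_1)=R(N_2)$, forcing $N_2$ to contain a matching lowest reticulation with the same parent and descendant leaf sets.

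Once a matching reticulation is located in both networks, I would perform a reduction: delete one of the two reticulation arcs at $v$ (the same arc in both networks), suppress resulting degree-two vertices, and obtain networks $N_1'$ and $N_2'$ with one fewer reticulation. The crux is verifying two things: first, that $N_1'$ and $N_2'$ remain binary normal networks with no near reticulations (so the induction hypothesis applies), and second, that $R(N_1')=R(N_2')$. The second point requires care, because deleting an arc can both remove triples and, more subtly, it must not accidentally create a network whose triple set differs from the honest restriction of $R(N_1)$. I would establish a lemma computing exactly how $R(N)$ changes under the removal of a lowest reticulation arc, showing the change is determined purely by the local leaf sets already identified, hence identical on both sides.

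The main obstacle I anticipate is the detection and matching step: proving that the rooted triples genuinely distinguish the two arcs into a lowest reticulation and identify the three associated leaf sets unambiguously, rather than merely detecting \emph{that} a reticulation exists. This is precisely where trees with four leaves were needed in Theorem~\ref{quads}, and it is exactly the configurations ruled out—near-sibling and near-stack reticulations—that would otherwise defeat a triples-only argument (as the counterexample in Lemma~\ref{l:near-sib.retics.same.triples} shows). So the heart of the proof will be a careful case analysis showing that, in the \emph{absence} of near reticulations, the local triple signature around a lowest reticulation is rich enough to reconstruct its neighborhood, and that the pendant subtree below $v$ together with the pendant structures below each parent provide distinct leaves whose triples force the configuration.
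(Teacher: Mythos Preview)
Your plan diverges from the paper's in two structural ways, and one of them creates genuine extra work that the paper avoids.

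First, the induction parameter and the reduction. The paper inducts on $|X|$, not on the reticulation count. By Lemma~\ref{cherries} a tree-child network always carries a cherry or a reticulated cherry, so the paper reduces by deleting a \emph{leaf}: the reticulation leaf $b$ of a reticulated cherry together with its reticulation parent (Lemma~\ref{still-normal} checks the result is again normal with no near reticulations). With leaf deletion, $R(N_i')$ is literally the set of triples in $R(N_i)$ not containing $b$, so $R(N_1)=R(N_2)\Rightarrow R(N_1')=R(N_2')$ is free. Your arc-deletion keeps the leaf set fixed, so $R(N_i')$ is the set of triples of $N_i$ admitting an embedding that avoids the deleted arc---and that is \emph{not} obviously a function of $R(N_i)$ alone. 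You correctly flag that you would need a lemma computing this change, but note that this is exactly the kind of statement that can fail: two networks with the same triple set could in principle disagree on which triples survive a particular arc deletion. The paper's reduction sidesteps the issue entirely.

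Second, the detection step is where essentially all the content lives, and your sketch underestimates what must be extracted from $R(N)$. The paper does not merely recognise that a reticulated cherry $\{a,b\}$ exists; via the five-axiom ``candidate set'' machinery (Definition~\ref{d:candidate.set} and Lemma~\ref{l:retic.cherry}) it also recovers the visibility set $V_{g_b}$ of the far grandparent of $b$. This invariant is indispensable in the reconstruction: after the inductive hypothesis yields $N_1'\cong N_2'$, one must show there is a \emph{unique} arc of $N_1'$ on which to reattach the second reticulation arc, and the paper proves this by showing (sublemma~\ref{most2}) that, in the absence of near reticulations, at most one vertex of $N_1'$ can have visibility set equal to $V_{g_b}$. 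Your phrase ``pin down its two parents' positions relative to the rest of the leaf set'' is the right aspiration, but you will need a concrete invariant that is both readable from $R(N)$ and uniquely determines the reattachment point---and proving uniqueness is precisely where the no-near-reticulation hypothesis is consumed. Without that, the induction closes to $N_1'\cong N_2'$ but does not lift back to $N_1\cong N_2$.
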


As we show in Section~\ref{properties}, the condition in the statement of Theorem~\ref{main} that $N_1$ and $N_2$ have no near-sibling reticulations is a necessary condition for the theorem to hold. In particular, if $N$ is a normal network on $X$ with a pair of near-sibling reticulations, then, {unless the reticulations have a certain structural property}, there is a normal network $N'$ on $X$ not isomorphic to $N$ such that $R(N)=R(N')$. However, it remains open as to whether excluding near-stack reticulations is also necessary in the statement. {We discuss this further in the last section. For now,} we make the following conjecture.

\begin{conj}\label{c:near.sib.enough}
Let $N_1$ and $N_2$ be two binary normal networks on $X$ with no near-sibling reticulations. Then $R(N_1)=R(N_2)$ if and only if $N_1\cong N_2$.
\end{conj}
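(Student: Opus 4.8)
The easy direction is immediate: if $N_1\cong N_2$ then the two networks display exactly the same embedded trees, so $R(N_1)=R(N_2)$. For the converse I would not attempt to reconstruct $N$ from $R(N)$ directly; instead the plan is to reduce to the already-available Theorem~\ref{quads}. Since that result says a binary normal network on $X$ is determined up to isomorphism by the pair $(R(N),Q(N))$, it suffices to establish the following reduction: \emph{if $N$ is a binary normal network on $X$ with no near reticulations, then $Q(N)$ is determined by $R(N)$}, in the sense that a rule depending only on $R(N)$ decides, for each four-element subset of $X$ and each caterpillar shape on it, whether that caterpillar belongs to $Q(N)$. Granting this, the converse of Theorem~\ref{main} follows at once: from $R(N_1)=R(N_2)$ we obtain $Q(N_1)=Q(N_2)$, and Theorem~\ref{quads} then gives $N_1\cong N_2$.

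To prove the reduction, fix four leaves $a,b,c,d$ and consider the caterpillar $C=(((a,b),c),d)$, whose four induced rooted triples are $ab|c$, $ab|d$, $ac|d$, and $bc|d$; by Theorem~\ref{2trees}, $C$ is the unique rooted binary tree on $\{a,b,c,d\}$ displaying all four. One half is trivial: if $N$ displays $C$ then a single switching of $N$ (a choice of one incoming edge at each reticulation, followed by suppression of degree-two vertices) restricts to $C$ on $\{a,b,c,d\}$, and hence displays each of the four triples, so all four lie in $R(N)$. The crux is the converse: I must show that if all four triples lie in $R(N)$, then some single switching displays $C$. A priori the four triples are witnessed by four different switchings, and the content of the lemma is that, for networks with no near reticulations, these local choices can always be reconciled into one.

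The hard part, and the place where the hypothesis is essential, is exactly this reconciliation. My plan is to track, for each of the four witnessing embeddings, the tree vertices and reticulation edges it uses near the paths to $a,b,c,d$, and then to apply the structural properties of near-reticulation-free normal networks developed in Section~\ref{properties} to show that the reticulation edge selected at each relevant reticulation can be made consistent across all four embeddings simultaneously. The difficulty is that a reticulation lying ``between'' two of the four leaves can be resolved one way to produce $ab|c$ and the opposite way to produce $ac|d$; the role of excluding near-sibling and near-stack reticulations is to guarantee that no pair of reticulations is close enough to force genuinely incompatible choices, so that a global switching displaying $C$ exists. I expect the main obstacle to be a careful case analysis of how reticulations can be positioned relative to $\{a,b,c,d\}$, organised so that each case either yields a consistent switching or produces one of the forbidden near-reticulation configurations.

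Finally, this strategy is consistent with the known landscape and helps localise where the hypothesis does its work. The near-sibling counterexample of Lemma~\ref{l:near-sib.retics.same.triples} already exhibits non-isomorphic normal networks with equal triple sets, and by construction these contain near reticulations; this both explains why the hypothesis is needed and suggests that the only obstructions to the reconciliation step are the forbidden configurations. Should that step prove intractable in full generality, a fallback is to induct on the number of reticulations, peeling off a lowest reticulation (detected from $R(N)$ using the same structural properties) and reducing to a smaller normal network; but I would first pursue the cleaner reduction to Theorem~\ref{quads}.
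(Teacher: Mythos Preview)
The statement you were asked to address is Conjecture~\ref{c:near.sib.enough}, which the paper explicitly does \emph{not} prove: it is stated as an open problem, and the discussion in Section~\ref{discussion} even shows that the obvious strengthening (allowing comparable near-sibling reticulations alongside near-stack ones) is false. There is therefore no ``paper's own proof'' to compare against.

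More seriously, your proposal does not actually attack the conjecture. Throughout you work under the hypothesis ``no near reticulations'' (i.e.\ no near-sibling \emph{and} no near-stack reticulations) and you say your reduction yields ``the converse of Theorem~\ref{main}''. That is the hypothesis and conclusion of Theorem~\ref{main}, not of Conjecture~\ref{c:near.sib.enough}, whose only assumption is the absence of near-\emph{sibling} reticulations. Your sketch never explains how the reconciliation step would go through when near-stack reticulations are permitted, and that is precisely the unresolved content of the conjecture.

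Even read as an alternative strategy for Theorem~\ref{main}, the proposal is only a plan. The paper's actual proof does not try to recover $Q(N)$ from $R(N)$; it proceeds by induction on $|X|$, using Lemmas~\ref{l:cherry} and~\ref{l:retic.cherry} to recognise cherries and reticulated cherries (together with the visibility set $V_{g_b}$) purely from $R(N)$, reducing via Lemma~\ref{still-normal}, and then arguing there is a unique arc to which the deleted reticulation edge can be reattached. Your key step---``if the four induced triples of a caterpillar lie in $R(N)$ then the caterpillar is displayed''---is asserted but not proved, and it is exactly where the difficulty lives: Lemma~\ref{l:near-sib.retics.same.triples} already shows that in the presence of near-sibling reticulations $R(N)$ cannot determine $Q(N)$, so any argument must pinpoint how the forbidden configurations block the reconciliation. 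You acknowledge this (``I expect the main obstacle to be a careful case analysis'') but do not carry it out, so as written the proposal is a strategy outline rather than a proof, and for the wrong statement.
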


The paper is organised as follows. The next section contains some necessary preliminaries including formal definitions. Section~\ref{properties} reviews and establishes some properties of normal networks. In particular, we show that having no near-sibling reticulations in the statement of Theorem~\ref{main} is a necessary condition. The proof of Theorem~\ref{main}, which follows the model of the proof of Theorem~\ref{quads}, relies on being able to recognise cherries and reticulated cherries of a binary normal network $N$ with no near reticulations using only the set $R(N)$ of rooted triples displayed by $N$. This recognition is established in Section~\ref{recognition}, while the proof of Theorem~\ref{main} is given in Section~\ref{proof}. We end the paper with a discussion in Section~\ref{discussion}.

\section{Preliminaries}

Throughout the paper $X$ denotes a non-empty finite set.

\noindent {\bf Phylogenetic networks and trees.} A \emph{binary phylogenetic network $N$ on $X$} is an acyclic directed graph with (i) a single vertex of in-degree~$0$, which has out-degree~$2$ and is called the \emph{root}, (ii) $|X|$ vertices of in-degree~$1$ and out-degree $0$ labelled bijectively by the elements in $X$, and (iii) all other vertices having either in-degree~$1$ and out-degree~$2$, called {\em tree vertices}, or in-degree~$2$ and out-degree~$1$, called {\em reticulations}. The set $X$ is the {\em leaf set} of $N$, and the elements in $X$ are called {\em leaves}. Furthermore, the arcs directed into a reticulation are called {\em reticulation arcs}. For technical reasons, if $|X|=1$, then the acyclic directed graph consisting of a single vertex labelled by the element in $X$ is a binary phylogenetic network on $X$. For a vertex $u$ of $N$, the {\em cluster set} of $u$, denoted by $C_u$, is the subset of $X$ consisting of the leaves of $N$ that are descendants of $u$ (those leaves for which there is a directed path from $u$). A {\em rooted binary phylogenetic $X$-tree} is a phylogenetic network on $X$ with no reticulations. A {\em rooted triple} is a binary phylogenetic tree $T$ on three leaves. If the leaf set of a rooted triple $T$ is $\{x, y, z\}$ and $z$ is adjacent to the root, then we denote $T$ by $xy|z$ or, equivalently, $yx|z$. Since all phylogenetic networks and phylogenetic trees considered in this paper are rooted and binary, we refer to a rooted binary phylogenetic network and a rooted binary phylogenetic tree as a phylogenetic network and phylogenetic tree, respectively.

Let $N_1$ and $N_2$ be two phylogenetic networks on $X$ with vertex sets $V_1$ and $V_2$, respectively. Then $N_1$ is {\em isomorphic} to $N_2$ if there is bijective map $\varphi:V_1\rightarrow V_2$ such that $\varphi(x)=x$ for all $x\in X$, and $(u, v)$ is an arc of $N_1$ if and only if $(\varphi(u), \varphi(v))$ is an arc of~$N_2$.

\noindent {\bf Tree-child and normal networks.} A phylogenetic network $N$ is \emph{tree-child}  if every non-leaf vertex has at least one child that is a tree vertex or a leaf. Equivalently, each vertex $v$ of $N$ has a \emph{tree path} to a leaf, that is, a path from $v$ to a leaf whose non-terminal vertices are tree vertices. A reticulation arc $(u, v)$ is a {\em shortcut} if there is a (directed) path from $u$ to $v$ that does not traverse $(u, v)$. A \emph{normal network} is a tree-child network without shortcuts. Tree-child and normal networks were introduced in \cite{cardona2009comparison} and \cite{willson2008reconstruction,willson2010properties-of-n}, respectively.

Tree-child networks and normal networks have the important property that every vertex is ``visible''. A vertex $u$ of a phylogenetic network on $X$ is {\em visible} if there is a leaf $\ell$ such that every path from the root of $N$ to $\ell$ traverses $u$, in which case $\ell$ {\em verifies the visibility} of $u$. The {\em visibility set} of $u$, denote by $V_u$, is the subset of $X$ consisting of those leaves that verify the visibility of $u$. Observe that $V_u\subseteq C_u$ and that $C_u-V_u$ is not necessarily empty. It turns out that visibility characterises the class of tree-child networks. The next lemma is well known and gives this characterisation as well as the other characterisation that is central to this paper. Its proof is an almost immediate consequence of the definition. A pair of reticulations in a phylogenetic network are \emph{sibling reticulations} if they share a parent and \emph{stack reticulations} if one of the reticulations is a parent of the other reticulation. 

\begin{lem}
Let $N$ be a phylogenetic network. Then the following statements are equivalent:
\begin{enumerate}[{\rm (i)}]
\item $N$ is tree-child.

\item Every vertex of $N$ is visible.

\item $N$ has no sibling and no stack reticulations.
\end{enumerate}
\label{tree-child}
\end{lem}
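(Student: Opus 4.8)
The plan is to establish the three equivalences by proving (i) $\Leftrightarrow$ (iii) directly from the definitions, then (i) $\Rightarrow$ (ii), and finally (ii) $\Rightarrow$ (iii); since (iii) $\Rightarrow$ (i) is part of the first step, these implications close the loop (ii) $\Rightarrow$ (iii) $\Rightarrow$ (i) $\Rightarrow$ (ii) and yield the full equivalence.

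For (i) $\Leftrightarrow$ (iii) I would argue directly with the child-type form of the tree-child condition. A non-leaf vertex $w$ violates the condition exactly when every child of $w$ is a reticulation. If $w$ is a tree vertex or the root, then both its children are reticulations with common parent $w$, that is, sibling reticulations; if $w$ is a reticulation, then its unique child is a reticulation, so $w$ and that child are stack reticulations. Conversely, the parent of a sibling pair is a tree vertex or the root both of whose children are reticulations, and the upper vertex of a stack pair is a reticulation whose only child is a reticulation, so in either case the tree-child condition fails. Hence $N$ is not tree-child if and only if it has sibling or stack reticulations.

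For (i) $\Rightarrow$ (ii) I would use the tree-path formulation recorded above. Given any vertex $v$, tree-child-ness supplies a tree path $v = v_0, v_1, \dots, v_k = \ell$ to a leaf, with $v_1, \dots, v_{k-1}$ tree vertices. Each of $v_1, \dots, v_{k-1}$ and the leaf $\ell$ has in-degree $1$, so reading the path in reverse shows that the only parent of $\ell$ is $v_{k-1}$, the only parent of $v_{k-1}$ is $v_{k-2}$, and so on down to $v_0 = v$. Thus every directed path from the root to $\ell$ passes through $v$, so $\ell$ verifies the visibility of $v$.

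The remaining implication (ii) $\Rightarrow$ (iii), which I would prove contrapositively by exhibiting a non-visible vertex whenever a sibling or stack pair is present, is where I expect the real work to lie, because ruling out non-visibility requires acyclicity to prevent the ``other parent'' of a reticulation from being reachable only through the vertex under examination. In the stack case, where $u$ is a reticulation whose child $v$ is also a reticulation, every leaf below $u$ lies below $v$; writing $q$ for the other parent of $v$, if every root-to-$q$ path met $u$ then the path $u \to \cdots \to q$ would pass through $v$ and create the cycle $v \to \cdots \to q \to v$, so in fact the root reaches $q$, and hence every leaf below $v$, while avoiding $u$, showing $u$ is not visible. In the sibling case, with $p$ the parent of reticulations $u$ and $v$ and with $a$ and $b$ their other parents, I would first observe that $p$ cannot be the root (else a cycle runs through $a$ and $b$), and then show that every leaf below $p$ admits a root-path avoiding $p$ by routing through $a$ or through $b$. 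The delicate point, and the main obstacle, is the case analysis showing that $a$ or $b$ can be reached without passing through $p$: a failure of the $a$-route forces $u$ to lie below $v$ (and symmetrically for $b$), so that the affected leaves can instead be routed through the surviving parent, while the impossibility of both routes failing follows from a directed-cycle contradiction among $u, v, a, b$. Tracking these subcases shows $p$ is not visible and completes the proof.
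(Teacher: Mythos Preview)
The paper does not actually prove this lemma: it records it as well known and says the proof is ``an almost immediate consequence of the definition.'' So there is no argument to compare against; you are supplying the omitted details, and your plan is correct. The equivalence (i)~$\Leftrightarrow$~(iii) and the implication (i)~$\Rightarrow$~(ii) are indeed immediate in the way you describe. Your contrapositive for (ii)~$\Rightarrow$~(iii) is the standard route: the stack case is clean, and in the sibling case your analysis is right---failure of the $a$-route forces $a$ (and hence $u$) below $v$, failure of the $b$-route forces $v$ below $u$, so both cannot fail, and when one fails every leaf below $p$ lies under the other reticulation and can be routed through the surviving other parent. That last reduction (failure of one route collapses all leaves below $p$ to a single side) is implicit in your sketch but worth making explicit when you write it out.
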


Let $u$ and $v$ be reticulations of a phylogenetic network $N$. Repeating the definitions given in the introduction, if there is a tree vertex of $N$ whose two children are $v$ and a tree vertex parent of $u$, then $u$ and $v$ are {\em near-sibling reticulations}. Furthermore, if the child of $u$ is a tree vertex and a parent of $v$, then $u$ and $v$ are {\em near-stack reticulations}. A phylogenetic network with no near-sibling and no near-stack reticulations is said to have no {\em near reticulations}.

Let $N$ be a phylogenetic network on $X$, and let $\{a, b\}\subseteq X$. We say that $\{a, b\}$ is a {\em cherry} of $N$ if $a$ and $b$ have the same parent. Furthermore, $\{a, b\}$ is a {\em reticulated cherry} if the parent $p_b$ of $b$ is a reticulation and the parent $p_a$ of $a$ is a parent of $p_b$, in which case $b$ is the {\em reticulation leaf} of the reticulated cherry. Observe that $p_a$ is necessarily a tree vertex. The next lemma shows that every tree-child network contains either a cherry or a reticulated cherry~\cite[Lemma~4.1]{bordewich2016determining}. 

\begin{lem}
Let $N$ be a tree-child network on $X$. If $|X|\ge 2$, then $N$ contains either a cherry or a reticulated cherry.
\label{cherries}
\end{lem}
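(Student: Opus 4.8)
The plan is to use an extremal argument on the vertices of out-degree~$2$ and read off the children of a lowest such vertex. Let $U$ be the set of vertices of $N$ of out-degree~$2$; this is exactly the set of tree vertices together with the root, and it is non-empty since it contains the root. As $N$ is a finite acyclic directed graph, I would choose a vertex $t\in U$ that is \emph{maximal} with respect to the descendant order, meaning that no vertex of $U$ is a proper descendant of $t$. Then every child of $t$ has out-degree $0$ or $1$, so each child of $t$ is either a leaf or a reticulation.

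Next I would apply the tree-child property at $t$. Since $t$ is a non-leaf vertex, at least one of its two children is a tree vertex or a leaf; as neither child is a tree vertex, some child of $t$ is a leaf, say $a$. Let $c$ be the other child of $t$. If $c$ is a leaf $b$, then $a$ and $b$ share the parent $t$, so $\{a,b\}$ is a cherry and we are done. Otherwise $c$ is a reticulation, which I rename $p_b$; it has a unique child $w$, and applying the tree-child property at the non-leaf vertex $p_b$ forces $w$ to be a tree vertex or a leaf. Here the maximality of $t$ does the work: $w$ is a proper descendant of $t$, so $w$ cannot lie in $U$ and hence is not a tree vertex, whence $w$ is the leaf $b$. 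Since the parent $p_b$ of $b$ is a reticulation and the parent $t$ of $a$ is one of the parents of $p_b$, the pair $\{a,b\}$ is a reticulated cherry.

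The step I would treat most carefully is confirming that in the reticulated-cherry case the vertex $t$ playing the role of $p_a$ is a genuine tree vertex and not the root, as required by the definition. The key observation is that the root can never land us in this case: if $t$ were the root with a reticulation child $p_b$ and a leaf child $a$, then $p_b$ would need a second parent $q$, with $q\neq t$ and $q\neq a$ (a leaf has out-degree~$0$); but every non-root vertex other than $a$ is reachable from the root only through $p_b$, so $q$ would be a descendant of $p_b$, producing a directed cycle and contradicting acyclicity. Consequently, when $t$ is the root its two children are both leaves and we obtain a cherry, while any reticulated cherry produced by the argument forces $t$ to be a proper descendant of the root, that is, a genuine tree vertex. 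This reconciles the construction with the observation, following the definition of a reticulated cherry, that $p_a$ is necessarily a tree vertex, and completes the proof.
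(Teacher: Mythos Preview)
Your proof is correct. The paper does not give its own proof of this lemma; it simply cites \cite[Lemma~4.1]{bordewich2016determining}. Your extremal argument---pick a lowest vertex $t$ of out-degree~$2$, read off its two children, and use the tree-child condition twice (at $t$ and at the reticulation child, if any)---is essentially the standard proof one finds in that reference.

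Two minor remarks. First, your use of ``maximal'' for what is really a \emph{lowest} vertex in the descendant order is slightly unusual, though you immediately clarify the intended meaning, so there is no ambiguity. Second, your final paragraph is sound but not strictly necessary: the paper's definition of a reticulated cherry does not \emph{require} $p_a$ to be a tree vertex, it only records this as an observation, and your argument independently verifies it. In any case, the impossibility you derive (a reticulation child of the root would force a directed cycle) is exactly why the observation holds, so the paragraph does no harm.
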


\noindent {\bf Displaying and embedding phylogenetic trees.} Let $N$ be a phylogenetic network on $X$, and let $T$ be a phylogenetic $X'$-tree, where $X'\subseteq X$. We say $N$ {\em displays} $T$ if there is a subdigraph $N'$ of $N$ such that, up to suppressing vertices of in-degree one and out-degree one, $T$ is isomorphic to $N'$, in which case the arc set of $N'$ is an {\em embedding} of $T$ in $N$.

In this paper, it is the embedding of rooted triples that is of interest. Let $N$ be a phylogenetic network, and suppose that $E_{xyz}$ is an embedding of the rooted triple $xy|z$ in $N$. If $e$ is an arc of $N$, we say that $E_{xyz}$ {\em uses $e$} if $e\in E_{xyz}$. Similarly, if $u$ is a vertex of $N$, we say that $E_{xyz}$ {\em uses $u$} if $u$ is an end vertex of an arc in $E_{xyz}$. Extending this terminology, let $f$ be an arc of $xy|z$. If the arcs of $E_{xyz}$ corresponding to $f$ use an arc $e$ of $N$, we say {\em $f$ uses $e$ in $E_{xyz}$}. Similarly, if $v$ is a vertex of $N$ and the arcs of $E_{xyz}$ corresponding to $f$ use $v$, we say {\em $f$ uses $v$ in $E_{xyz}$}.

\section{Properties of Normal Networks}
\label{properties}

We begin this section by showing that the condition of having no near-sibling reticulations is a necessary condition in the statement of Theorem~\ref{main}. Let $N$ be a normal network on $X$, and suppose that $u$ and $v$ are near-sibling reticulations of $N$, where a parent $t$ of $v$ is a grandparent of $u$, as shown in the left of Figure~\ref{f:near-siblings.same.triples}. Let $s$ denote the parent of $u$ that is a child of $t$, and let $w$ denote the child of $s$ that is not $u$. Since $N$ is normal, $w$ is either a tree vertex or a leaf. Let $N'$ be the phylogenetic network on $X$ obtained from $N$ by deleting $(s, w)$, suppressing $s$, subdividing the edge $(t, v)$ with a new vertex $s'$, and adjoining $w$ to this new vertex with a new edge. This operation on $N$ is illustrated in Figure~\ref{f:near-siblings.same.triples} and we say that $N'$ has been obtained from $N$ by a {\em nearest neighbour interchange relative to $\{u, v\}$}. {Certainly, $N$ is not isomorphic to $N'$ and, provided $(t, u)$ is not a shortcut in $N'$, it is easily checked that, as $N$ is normal, $N'$ is also normal. Note that if $(t, u)$ is a shortcut in $N'$, then $v$ is an ancestor of $u$, that is, $u$ and $v$ are {\em comparable} in $N$ (and $N'$).} The next lemma shows that we cannot distinguish $N$ and $N'$ based only on their sets of rooted triples. 
The case $(t, u)$ is a shortcut is discussed in Section~\ref{discussion}.

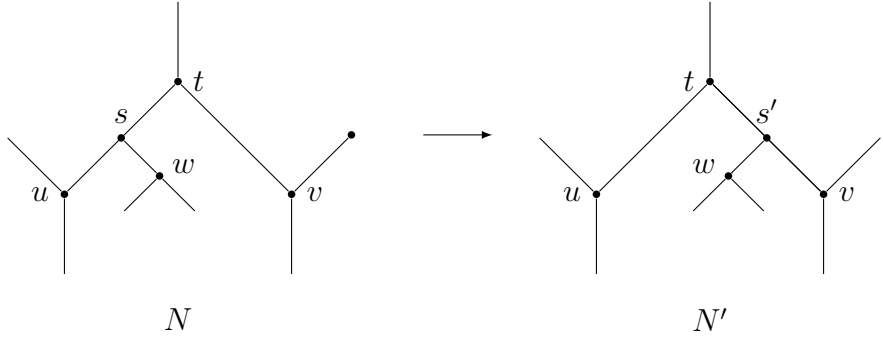
\begin{figure}[ht]
\begin{tikzpicture}[
   sdot/.style={circle,fill,radius=1pt,inner sep=1pt}]

\node[sdot,label={left:{$u$}}] (r1) {}; 
\node[above right=2cm of r1,sdot,label={right:{$t$}}] (parent) {}; 
\node[below right=2cm of parent,sdot,label={right:{$v$}}] (r2) {}; 
\node[above=10mm of parent] (gp) {}; 
\node[sdot,above right=10mm of r2] (r2p) {}; 
\node[above left= 10mm of r1] (r1p) {}; 
\node[below=10mm of r1] (r1c) {}; 
\node[below=10mm of r2] (r2c) {}; 

\draw[] (r1) to node[pos=0.5,sdot,label={above:{$s$}}] (t1) {} (parent);
\node[sdot,below right=6mm of t1] (t1c) {}; 
\node[below left=6mm of t1c] (t1cl) {};
\node[below right=6mm of t1c] (t1cr) {};

\draw (gp)--(parent) (r2)--(r2c) (r1p)--(r1)--(r1c);
\draw (r2p) to (r2);
\draw (parent) to (r2);
\draw[] (t1) to node[pos=.8,label={right:{$w$}}] () {} (t1c.north west);
\draw (t1cl)--(t1c)--(t1cr);

\node[right=6mm of r2p] (larrow) {};
\node[right=1.8cm of r2p] (rarrow) {};
\draw[>=latex, ->] (larrow)--(rarrow);

\node[below=28mm of parent] () {$N$};

\begin{scope}[xshift=7cm]
\node[sdot,label={left:{$u$}}] (r1) {}; 
\node[above right=2cm of r1,sdot,label={left:{$t$}}] (parent) {};
\node[below right=2cm of parent,sdot,label={right:{$v$}}] (r2) {};
\node[above=10mm of parent] (gp) {};
\node[above right=10mm of r2] (r2p) {};
\node[above left= 10mm of r1] (r1p) {};
\node[below=10mm of r1] (r1c) {};
\node[below=10mm of r2] (r2c) {};

\draw[] (r1) to (parent);
\draw (r2) to node[pos=0.5,sdot,label={above:{$s'$}}] (t1) {} (parent);
\node[sdot,below left=6mm of t1] (t1c) {};
\node[below left=6mm of t1c] (t1cl) {};
\node[below right=6mm of t1c] (t1cr) {};

\draw (gp)--(parent)--(r2)--(r2c) (r1p)--(r1)--(r1c) (r2p)--(r2);
\draw[] (t1) to node[pos=.8,label={left:{$w$}}] () {} (t1c.north east);
\draw (t1c.north east) to (t1c);
\draw (t1cl)--(t1c)--(t1cr);

\node[below=28mm of parent] () {$N'$};
\end{scope}
\end{tikzpicture}
\caption{Normal networks $N$ and $N'$, where $N'$ has been obtained from $N$ by a nearest neighbour interchange relative to $\{u, v\}$.}
\label{f:near-siblings.same.triples}
\end{figure}

\begin{lem}
\label{l:near-sib.retics.same.triples}
Let $N$ be a normal network with near-sibling reticulations $u$ and $v$, and let $N'$ be the {phylogenetic} network obtained from $N$ by a nearest-neighbour interchange relative to $\{u, v\}$. Then $R(N)=R(N')$. {In particular, if $u$ and $v$ are non-comparable, then $N'$ is normal and $R(N)=R(N')$.}
\end{lem}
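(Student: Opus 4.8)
The plan is to reduce the equality of triple sets to a comparison of the rooted binary $X$-trees that $N$ and $N'$ display, using the standard fact that for a tree-child network $M$ a rooted triple lies in $R(M)$ exactly when it is displayed by one of the trees on $X$ obtained from $M$ by retaining a single incoming arc at each reticulation and suppressing degree-two vertices. Since $N$ and $N'$ coincide outside the gadget around $t$, such a tree is determined locally by which incoming arc is kept at $u$ and at $v$. Write $a$ and $b$ for the parents of $u$ and $v$ other than the ones incident with $s$ and $t$; the arcs $(a,u)$ and $(b,v)$ are common to $N$ and $N'$. First I would verify the routine point that whenever at least one of $u,v$ keeps its ``near-sibling'' arc replaced by $(a,u)$ or $(b,v)$, suppressing $s$ (in $N$) or $s'$ (in $N'$) yields the \emph{same} local shape, so the corresponding displayed trees of $N$ and $N'$ are isomorphic. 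Hence the two displayed-tree sets can differ only in the single tree in which both $u$ and $v$ retain their near-sibling arcs.

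For $R(N)\subseteq R(N')$, consider this exceptional tree of $N$: there $v$ sits directly below $t$ while $u$ and $w$ are grouped below $s$, so locally it groups $C_u\cup C_w$ against $C_v$. I would show that every triple it displays already occurs in the tree where $u$ keeps its near-sibling arc but $v$ is reattached through $b$ (a tree common to $N$ and $N'$). Reattaching $v$ through $b$ moves $C_v$ out of $C_t$ while leaving the grouping of $C_u$ and $C_w$ at $s$ untouched, so the same triples persist; the content is that this does not disturb the relevant lowest common ancestors. This follows from $N$ having no shortcuts: as $(t,v)$ is not a shortcut, $b$ is not a descendant of $t$ and $b\notin C_u\cup C_w$ (otherwise $t\leadsto b\to v$, respectively $t\to s\to u\leadsto b\to v$ or $t\to s\to w\leadsto b\to v$, would bypass $(t,v)$), and $b\notin C_v$ since $b$ is a parent of $v$. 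Thus any leaf of $C_v$ routed through $b$ is incomparable, in the relevant tree, with the leaves of $C_u\cup C_w$ grouped at $t$, as required. This inclusion uses only that $N$ is normal.

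The reverse inclusion is the symmetric statement and is where I expect the real difficulty. By the same reduction it amounts to showing that the triples displayed by the exceptional tree of $N'$, which groups $C_v\cup C_w$ against $C_u$, are recovered by reattaching $u$ through its other parent $a$; the argument above goes through verbatim provided $a$ is not a descendant of $t$, that is, $a\notin C_t=C_u\cup C_v\cup C_w$. One again gets $a\notin C_u\cup C_w$ from the absence of shortcuts, while the only remaining possibility $a\in C_v$ holds precisely when $v$ is an ancestor of $u$ (the path $v\leadsto u$ must enter $u$ through $a$, since entering through $s$ would force the cycle $v\leadsto t\to s$), i.e. exactly when $u$ and $v$ are comparable, equivalently when $(t,u)$ is a shortcut in $N'$. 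Hence, when $u$ and $v$ are non-comparable, we obtain $a\notin C_t$, the reattachment argument applies, $N'$ is normal, and $R(N)=R(N')$. The main obstacle is thus precisely this reattachment step: verifying that rerouting a reticulation through its remote parent preserves every displayed triple, which is where the no-shortcut hypothesis and the near-sibling geometry are essential; it is exactly the comparable case (in which $a$ falls inside $C_v$) that resists this argument, and that case I would treat separately.
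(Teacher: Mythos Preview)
Your reduction to displayed $X$-trees is a legitimate alternative to the paper's direct embedding manipulation, and your observation that the two displayed-tree families agree except at the ``exceptional'' choice (both $u$ and $v$ retaining their near-sibling arcs) is correct. But the core step of your forward inclusion does not go through. You assert that every triple of the exceptional $N$-tree already lies in the single common tree obtained by keeping $u$'s near-sibling arc while rerouting $v$ through $b$. This is false: rerouting $v$ through $b$ does not merely ``move $C_v$ out of $C_t$''---it moves the $v$-subtree \emph{adjacent to} the leaves hanging off $b$'s other child. Concretely, take $x\in C_u\cup C_w$, let $\gamma$ be a leaf below $v$, and let $\beta$ be a leaf below the non-$v$ child of $b$. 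In the exceptional $N$-tree the triple on $\{x,\gamma,\beta\}$ is $x\gamma|\beta$ (since $x$ and $\gamma$ meet at $t$ while $\beta$ lies outside $C_t$), whereas in your comparison tree $\gamma$ and $\beta$ now meet below $b$, so the triple becomes $\gamma\beta|x$. The triple $x\gamma|\beta$ \emph{is} in $R(N')$, but only via the exceptional $N'$-tree, which your argument has explicitly set aside.

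So the difficulty is not where you locate it (the reverse inclusion and the comparable case) but already in the forward direction: a single reroute-through-$b$ tree cannot absorb all triples of the exceptional $N$-tree. To salvage the displayed-tree approach you would have to distribute those triples across \emph{several} $N'$-trees, including the exceptional one, and the resulting case analysis is no shorter than what the paper does. The paper instead takes an embedding of a given triple $xy|z$ in $N$ and modifies it arc-by-arc into an embedding in $N'$, organising the cases by which arc of the triple traverses $(s,w)$; the no-shortcut hypothesis on $(t,v)$ enters exactly once, in the case where the embedding simultaneously uses $(s,w)$ and $(t,v)$, and the reverse inclusion is then obtained by interchanging the roles of $N$ and $N'$.
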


\begin{proof}
Without loss of generality, we may assume that a parent of $v$ is a grandparent of~$u$. In $N$, let $s$ and $t$ denote parents of $u$ and $v$, respectively, such that $t$ is a parent of $s$. Let $w$ denote the tree vertex or leaf child of $s$, and let $e=(s, w)$. 

In $N'$, let $t$ and $s'$ denote the parents of $u$ and $v$, respectively, such that $t$ is a parent of $s'$. Let $xy|z\in R(N)$ and let $E_{xyz}$ be an embedding of $xy|z$ in $N$. Using $E_{xyz}$, we will show that there is an embedding $E'_{xyz}$ of $xy|z$ in $N'$. If $E_{xyz}$ avoids the arc $e$, then it is clear that $xy|z$ is a rooted triple of $N'$, so suppose that $E_{xyz}$ uses $e$.

Label the arcs of the rooted triple $xy|z$ by $f_x$, $f_y$, $f_{xy}$, and $f_z$, where $f_x$, $f_y$, and $f_z$ are the pendant arcs incident with $x$, $y$, and $z$, respectively (see Figure~\ref{f:edge.labels}).
First assume that $f_z$ uses $e$. Say $f_z$ uses $t$. If $f_{xy}$ avoids $t$, then set
$$E'_{xyz} = \big(E_{xyz}-\big\{(t, s),(s, w)\big\}\big)\cup \big\{(t, s'),(s', w)\big\},$$
while if $f_{xy}$ uses $t$, then set
$$E'_{xyz} = \big(E_{xyz}-\big\{(t, s), (s, w), (t, v)\big\}\big)\cup \big\{(s', w), (s', v)\big\}.$$
Otherwise, if $f_z$ avoids $t$, then set
$$E'_{xyz} = \big(E_{xyz}-\big\{(s, w), (s, u)\big\}\big)\cup \big\{(t, u), (t, s'), (s', w)\big\}.$$
Since $E_{xyz}$ is an embedding of $xy|z$ in $N$, it is easily checked that $E'_{xyz}$ is an embedding of $xy|z$ in $N'$. Taking a similar approach, if $f_{xy}$ uses $e$, it is straightforward to modify $E_{xyz}$ to get an embedding of $xy|z$ in $N'$.
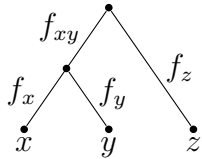
\begin{figure}[ht]
\begin{tikzpicture}[sdot/.style={circle,fill,radius=1pt,inner sep=1pt},label distance=-1mm]
\node[sdot] (root) {};
\node[sdot,below=15mm of root,label={below:{$y$}}] (y) {};
\node[sdot,right=10mm of y,label={below:{$z$}}] (z) {};
\node[sdot,left=10mm of y,label={below:{$x$}}] (x) {};
\draw (root) to 
   node[pos=.15,label={left:{$f_{xy}$}}] () {}
   node[pos=.5,sdot] (xy) {} 
   node[pos=.7,label={left:{$f_{x}$}}] () {}
   (x);
\draw (xy) to
   node[pos=.4,label={right:{$f_{y}$}}] () {}
   (y);
\draw (root) to 
   node[pos=.5,label={right:{$f_{z}$}}] () {}
   (z);
\end{tikzpicture}
\caption{Arc labels of the rooted triple $xy|z$ used in the proof of Lemma~\ref{l:near-sib.retics.same.triples}.}
\label{f:edge.labels}   
\end{figure}

Now assume that $f_x$ uses $e$. If $f_x$ uses $t$, then a similar modification to that in the previous paragraph gives an embedding of $xy|z$ in $N'$. Say $f_x$ avoids $t$. Then $f_{xy}$ uses $(t,s)$. If $f_z$ avoids $(t, v)$, then again a modification of $E_{xyz}$ similar to that in the previous paragraph gives an embedding of $xy|z$ in $N'$. Say $f_z$ uses $(t, v)$. Let $P_{t}$ be a path from the root of $N'$ to $t$ and let $P_v$ be a path from the root of $N'$ to $v$ using the reticulation arc directed into $v$ that is not $(s', v)$.

If $r$ denotes the last vertex common to $P_{t}$ and $P_v$, let $E_{t}$ and $E_v$ denote the arcs of $P_{t}$ and $P_v$ from $r$ to $t$ and from $r$ to $v$, respectively. Set
$$E'_{xyz} = \big(E_{xyz}-\big\{(s, u), (s, w), (t, s), (t, v)\big\}\big)\cup \big\{t, u), (s', w), (t, s')\big\}\cup E_{t}\cup E_v.$$
Then $E'_{xyz}$ is an embedding of $xy|z$ in $N'$ unless $v$ is a descendant of either $u$ or $w$ in $N$. But then $(t, v)$ is a shortcut in $N$, contradicting that $N$ is normal. By symmetry, if $f_y$ uses $e$, then we can modify $E_{xyz}$ to get an embedding of $xy|z$ in $N'$. Thus $xy|z\in R(N')$ and so $R(N)\subseteq R(N')$. The same argument but with the roles of $N$ and $N'$ interchanged gives $R(N')\subseteq R(N)$. This completes the proof of the lemma.
\end{proof}

The next lemma relates a reticulated cherry of a normal network with the rooted triples it displays.

\begin{lem}
\label{l:Vgb.properties}
Let $N$ be a normal network on $X$. Let $\{a, b\}$ be a reticulated cherry of $N$ with reticulation leaf $b$, and let $g_b$ be the grandparent of $b$ that is not the parent of $a$. Then each of the following hold:
\begin{enumerate}[{\rm (i)}]
\item For all $x\in X-\{a, b\}$, we have $ab|x\in R(N)$.

\item For all $c\in V_{g_b}$ and $x\in X-(V_{g_b}\cup \{b\})$, we have $bc|x\in R(N)$ and $ac|b\not\in R(N)$.

\item For all distinct $c, c'\in V_{g_b}$, we have $bc|c'\not\in R(N)$.

\item For all $c\in V_{g_b}$ and $x\in X-(V_{g_b}\cup \{a, b\})$, we have $cx|a\in R(N)$ if and only if $bx|a\in R(N)$.

\item If $bx|y\in R(N)$ and $ax|y\not\in R(N)$, where $x, y\in X-(V_{g_b}\cup \{a, b\})$, then $cx|y\in R(N)$ for all $c\in V_{g_b}$.

\item If $N$ is normal with no near reticulations, $c\in V_{g_b}$, and $ac|x\in R(N)$ for all $x\in X-(V_{g_b}\cup \{a, b\})$, then $ax|c, ax|b\not\in R(N)$. 
\end{enumerate}
\end{lem}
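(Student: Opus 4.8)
The plan is to reduce all six parts to a short list of structural facts about the neighbourhood of the reticulated cherry together with a single rerouting principle for embeddings. First I would fix notation: let $p_b$ be the reticulation parent of $b$, let $p_a$ be the (tree-vertex) parent of $a$, which is a parent of $p_b$, so that $g_b$ is the second parent of $p_b$. Since $N$ is tree-child, $g_b$ cannot have $p_b$ as its only child (that would be a stack reticulation, excluded by Lemma~\ref{tree-child}), so $g_b$ is a tree vertex with a second child $c_0$; one checks $C_{p_a}=\{a,b\}$, $C_{p_b}=\{b\}$, and (since $b\notin V_{g_b}$) that $V_{g_b}\subseteq C_{c_0}$. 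From normality I would extract that $(p_a,p_b)$ and $(g_b,p_b)$ are not shortcuts, which forces $p_a$ and $g_b$ to be \emph{incomparable}: a descent path from one to the other followed by the appropriate reticulation arc would create a shortcut at $p_b$. From visibility I record that, for $c\in V_{g_b}$, every root-to-$c$ path passes through $g_b$ and must leave $g_b$ along $(g_b,c_0)$, since $(g_b,p_b)$ reaches only $b$. The workhorse is that the three pendant branches of any embedded triple are internally vertex-disjoint, so a vertex $d$ lying on two branches always yields a contradiction: a directed cycle, or a root-to-$c$ path bypassing $g_b$, or a shortcut at $p_b$.

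With these tools the \emph{existence} statements are direct constructions. For (i) I would use the cherry $p_a\to a$, $p_a\to p_b\to b$ and attach any $x$ above $p_a$ at the last common vertex of a root-to-$p_a$ path and a root-to-$x$ path; since $C_{p_a}=\{a,b\}$ the leaf $x$ is forced to branch strictly above $p_a$. For the first half of (ii) I would root the cherry $\{b,c\}$ at $g_b$ itself, via $g_b\to p_b\to b$ and $g_b\to c_0\to\cdots\to c$, and attach $x$ above $g_b$ along a root-to-$x$ path avoiding $g_b$ (available since $x\notin V_{g_b}$); disjointness of that path from the two cherry branches is exactly the reconvergence principle. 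The \emph{non-existence} statements in (ii) and (iii) follow by locating forced vertices: the branch to $a$ uses $p_a$, the branch to $b$ uses $p_b$ and so leaves from $p_a$ or $g_b$, and the branch to any $c\in V_{g_b}$ passes through $g_b$. For $ac\mid b\notin R(N)$, routing $b$ through $p_a$ collides with the $a$-branch and routing $b$ through $g_b$ collides with the $c$-branch; the remaining positions of $g_b$ give a directed cycle or force $p_a$ below $g_b$ and hence a shortcut. Part (iii) is the same bookkeeping with $c,c'\in V_{g_b}$ forced through $g_b$ on opposite branches of $bc\mid c'$.

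Parts (iv) and (v) I would handle by swapping between $b$ and the leaves of $V_{g_b}$ across the two children of $g_b$. The decisive point is that the leaf grouped with $x$ is \emph{forced through $g_b$}: when the outsider is $a$ (part (iv)) the branch to $b$ cannot use $p_a$ without meeting the $a$-branch, and in (v) routing $b$ through $p_a$ would let us replace $b$ by $a$ and produce the forbidden $ax\mid y$. Once both $b$ and $c$ are routed through $g_b$, I swap the tail $g_b\to p_b\to b$ with a path $g_b\to c_0\to\cdots\to c$ and conversely: the direction that only adds the fresh vertices $p_b,b$ is immediate, while the reverse adds a whole path into the $c_0$-subtree whose disjointness is guaranteed by the reconvergence principle. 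The one subtlety, in the direction $cx\mid a\Rightarrow bx\mid a$, is that $g_b$ may sit strictly above the cherry-root of $\{c,x\}$; in that case $g_b$ itself serves as the cherry-root of $\{b,x\}$ (with $b$ via $p_b$ and $x$ via $c_0$), and the configurations placing $g_b$ at or above the relevant branch point are excluded by the shortcut argument.

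The main obstacle is part (vi), and it is the only place I expect to need the no-near-reticulation hypothesis. Here I would argue by contradiction: assuming $ax\mid c\in R(N)$ (or $ax\mid b\in R(N)$) for some $x$, together with the standing hypothesis that $ac\mid x'\in R(N)$ for every outside $x'$, I would extract two embeddings and read off the constraints around $p_b$. Since $a$ routes through $p_a$ and $c$ through $g_b$, and $p_a,g_b$ are incomparable with common reticulation child $p_b$, the universal hypothesis should pin $a$ and $c$ to a lowest common ancestor sitting just above this incomparable pair, while $ax\mid c$ forces $x$ into the same region; reconciling this in a tree-child network without shortcuts should be possible only if some tree vertex feeding $p_b$ is itself the parent of a second reticulation, i.e.\ a near-sibling or near-stack configuration with $p_b$. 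I anticipate the delicate step to be showing that the forced reticulation is \emph{adjacent} to $p_b$ in the precise sense of the near-reticulation definitions, so that their exclusion yields the contradiction; organising this case analysis cleanly is where the real work lies.
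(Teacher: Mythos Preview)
Your setup and your treatment of parts (i)--(v) are essentially the paper's argument. The paper in fact cites (i)--(iii) to~\cite{linz2020caterpillars} without reproving them, and for (iv)--(v) it carries out exactly the rerouting at $g_b$ that you describe, including the case in (iv) where $g_b$ lies strictly above the cherry root of $\{c,x\}$, and the observation in (v) that if the $b$-branch entered through $p_a$ one could swap $b$ for $a$ and obtain the forbidden $ax|y$.

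The gap is in (vi). You plan to assume $ax|c\in R(N)$ and squeeze a near reticulation out of the two embeddings; the paper does not argue this way, and your sketch lacks the concrete vertex that makes the argument work. The paper introduces $g_a$, the (unique) parent of $p_a$, and applies the no-near-reticulation hypothesis \emph{directly to $g_a$}: if $g_a$ were a reticulation then $g_a$ and $p_b$ would be near-stack, and if the non-$p_a$ child of $g_a$ were a reticulation then that child and $p_b$ would be near-sibling. Hence $g_a$ is a tree vertex admitting a tree path, avoiding $p_a$, to some leaf $\ell$. One then splits on whether $g_a$ is an ancestor of $g_b$. If not, then $\ell\notin V_{g_b}\cup\{a,b\}$ and one checks $ac|\ell\notin R(N)$, contradicting the universal hypothesis; so this case is impossible. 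If so, the symmetric argument applied to the parent of $g_b$ shows that parent is an ancestor of $a$, and acyclicity then forces $g_a$ to \emph{be} the parent of $g_b$. From that local configuration the conclusions $ax|c,\,ax|b\notin R(N)$ follow at once, since every path to $c$ (and to $b$ via $g_b$) must pass through $g_a$, which is already committed to the $a$-branch of any embedded triple. Your phrase ``lowest common ancestor sitting just above the incomparable pair'' is morally $g_a$, but what you have not supplied is the mechanism forcing it adjacent to both $p_a$ and $g_b$; that mechanism is precisely the no-near-reticulation hypothesis applied at $g_a$ and at the parent of $g_b$, used structurally up front rather than as the terminal contradiction you anticipate.
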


\begin{proof} 
Items (i)--(iii) are established in~\cite{linz2020caterpillars}. For parts (iv)--(vi), Figure~\ref{f:proof.of.5} can be helpful in visualising the arguments below.  

Let $p_a$ and $p_b$ denote the parents of $a$ and $b$ in $N$. To prove (iv), let $c\in V_{g_b}$ and $x\in X-(V_{g_b}\cup \{a, b\})$. If $cx|a\in R(N)$, then, as $c$ is in the visibility set of $g_b$, an embedding $E_{cxa}$ of $cx|a$ in $N$ uses $g_b$ and avoids $p_b$. Let $f_c$ denote the arc of $cx|a$ incident with $c$. If $f_c$ avoids $g_b$ in $E_{cxa}$, in which case $x\in C_{g_b}$, set $E_c$ to be the subset of arcs of $E_{cxa}$ used by $f_c$. On the other hand, if $f_c$ uses $g_b$ in $E_{xca}$, set $E_c$ to be the subset of arcs of $E_{cxa}$ that induce a path in $N$ from $g_b$ to $c$. In both instances, it is easily seen that
$$(E_{cxa}-E_c)\cup \{(g_b, p_b), (p_b, b)\}$$
is an embedding of $bx|a$ in $N$. The converse is simpler and omitted, completing the proof of (iv).

For the proof of (v), let $x, y\in X-(V_{g_b}\cup \{a, b\})$, and suppose that $bx|y\in R(N)$ but $ax|y\not\in R(N)$. Let $E_{bxy}$ be an embedding of $bx|y$ in $N$. Since $ax|y\not\in R(N)$, it follows that $E_{bxy}$ uses $g_b$ and the arc directed into $g_b$. Therefore, if $c\in V_{g_b}$ and $P_c$ is a path from $g_b$ to $c$, then
$$(E_{bxy}-\{(g_b, p_b), (p_b, b)\})\cup E(P_c)$$
is an embedding of $cx|y$ in $N$ unless $P_c$ intersects the path in $E_{bxy}$ corresponding to the pendant edge of $bx|y$ incident with $y$. But $y\not\in V_{g_b}$, so this is not possible; otherwise, there is a path from the root of $N$ to $c$ avoiding $g_b$. This completes the proof of (v).

To prove (vi), suppose that $N$ has no near reticulations, $c\in V_{g_b}$, and $ac|x\in R(N)$ for all $x\in X-(V_{g_b}\cup \{a, b\})$. Let $g_a$ denote the parent of $p_a$, as shown in Figure~\ref{f:proof.of.5}. If $g_a$ is not an ancestor of $g_b$ (Figure~\ref{f:proof.of.5}(b)), then $g_a$ is a tree vertex; otherwise, $g_a$ and $p_b$ are near-stack reticulations. If $g_a$ is the parent of a reticulation, then this reticulation and $p_b$ are near-sibling reticulations, a contradiction. It now follows that there is a tree path from $g_a$ to a leaf $\ell\not\in V_{g_b}\cup \{a, b\}$. But then, if $c\in V_{g_b}$, the rooted triple $ac|\ell\not\in R(N)$, a contradiction.

Now assume that $g_a$ is an ancestor of $g_b$ (Figure~\ref{f:proof.of.5}(c)).  A similar argument to that in the last paragraph shows that the parent of $g_b$ is an ancestor of $a$.  Since $N$ is acyclic, this implies that $g_a$ is the parent of $g_b$. It now follows that $ax|c, ax|b\not\in R(N)$ for all $x\in X-(V_{g_b}\cup \{a, b\})$. This completes the proof of (vi) and the lemma.
\end{proof}

\begin{figure}[ht]
\scalebox{.8}{
\begin{tikzpicture}[sdot/.style={circle,fill,radius=1pt,inner sep=1pt}]

\node[sdot,label={below:{$b$}}] (b) {}; 
\node[sdot,above=10mm of b,label={right:{$p_b$}}] (parent) {}; 
\node[sdot,above right=15mm of parent,label={right:{$g_b$}}] (p2) {}; 
\node[sdot,above left=15mm of parent,label={right:{$p_a$}}] (p1) {}; 
\node[above=6mm of p2] (p2p) {}; 
\node[above=6mm of p1] (p1p) {}; 
\node[below right=10mm of p2,draw,rectangle,label={below:{$C_{g_b}$}}] (ell2) {
   \begin{tikzpicture}
      \node[below right=10mm of p2,draw,dashed,ellipse] (VGb) {$V_{g_b}$};
   \end{tikzpicture}
}; 
\node[sdot,below left=15mm of p1,label={below:{$a$}}] (ell1) {}; %
\node[below left=5mm of p1p] (p1pc) {};
\draw (p2p)--(p2)--(parent)--(p1)--(p1p) (b)--(parent);
\draw (p1)--(ell1);
\draw (p2)--(ell2);

\node[below=10mm of b] () {(a)};

\begin{scope}[xshift=7cm]

\node[sdot,label={below:{$b$}}] (b) {}; 
\node[sdot,above=10mm of b,label={right:{$p_b$}}] (parent) {}; 
\node[sdot,above right=15mm of parent,label={right:{$g_b$}}] (p2) {}; 
\node[sdot,above left=15mm of parent,label={right:{$p_a$}}] (p1) {}; 
\node[above=6mm of p2] (p2p) {}; 
\node[sdot,above=15mm of p1,label={left:{$g_a$}}] (p1p) {}; 
\node[above=6mm of p1p] (p1pp) {};
\node[below right=10mm of p2,draw,rectangle,label={below:{$C_{g_b}$}}] (ell2) {
   \begin{tikzpicture}
      \node[below right=10mm of p2,draw,dashed,ellipse] (VGb) {$V_{g_b}$};
   \end{tikzpicture}
}; 

\node[sdot,below left=15mm of p1,label={below:{$a$}}] (ell1) {}; %
\node[below left=5mm of p1p] (p1pc) {};
\node[below left=10mm of p1pc,label={left:{$\ell$}}] (ell) {};
\draw (p2p)--(p2)--(parent)--(p1)--(p1p) (b)--(parent) (p1p)--(p1pc.center) (p1p)--(p1pp);
\draw[decorate,decoration={snake,amplitude=1mm,segment length=3mm}] (p1pc.center) to (ell.center);
\draw (p1)--(ell1);
\draw (p2)--(ell2);

\node[below=10mm of b] () {(b)};
\end{scope}
\begin{scope}[xshift=14cm]
\node[sdot,label={below:{$b$}}] (b) {}; 
\node[sdot,above=10mm of b,label={right:{$p_b$}}] (parent) {}; 
\node[sdot,above right=15mm of parent,label={right:{$g_b$}}] (p2) {}; 
\node[sdot,above left=15mm of parent,label={right:{$p_a$}}] (p1) {}; 
\node[above=6mm of p2] (p2p) {}; 
\node[sdot,above=30mm of parent,label={left:{$g_a$}}] (p1p) {}; 
\node[above=6mm of p1p] (p1pp) {};
\node[below right=10mm of p2,draw,rectangle,label={below:{$C_{g_b}$}}] (ell2) {
   \begin{tikzpicture}
      \node[below right=10mm of p2,draw,dashed,ellipse] (VGb) {$V_{g_b}$};
   \end{tikzpicture}
}; 
\node[sdot,below left=15mm of p1,label={below:{$a$}}] (ell1) {}; %
\node[below left=5mm of p1p] (p1pc) {};
\draw (p2)--(parent)--(p1) (b)--(parent) (p1p)--(p1pp);
\draw (p1)--(ell1);
\draw (p2)--(ell2);
\draw (p1) to [out=80,in=-135] (p1p);
\draw[dashed] (p2) to [out=100,in=-45] (p1p);

\node[below=10mm of b] () {(c)};
\end{scope}
\end{tikzpicture}
}
\caption{(a) shows the generic reticulated cherry scenario for Lemma~\ref{l:Vgb.properties}, while (b) and (c) illustrate scenarios for the proof of part (vi) of Lemma~\ref{l:Vgb.properties}. The wavy line in (b) indicates a tree path from $g_a$ to leaf $\ell$, while the dashed line in (c) indicates a (directed) path from $g_a$ to~$g_b$.}
\label{f:proof.of.5}
\end{figure}
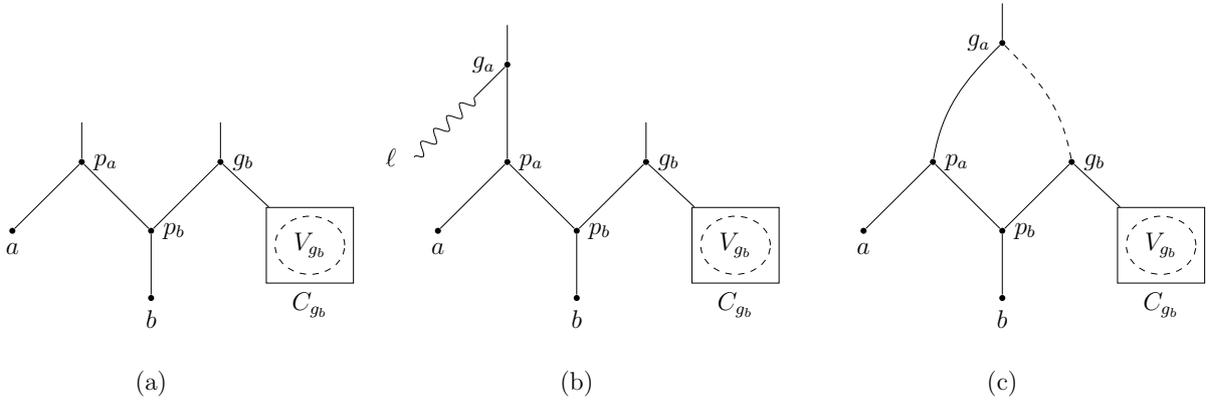

\noindent {\bf Remark.} The scenario in Lemma~\ref{l:Vgb.properties}(vi) also implies $cx|a, cx|b\not\in R(N)$ as an outcome, but this is not needed for our later arguments.

\section{Detecting Cherries and Reticulated Cherries From Rooted Triples}
\label{recognition}

A key part of proving Theorem~\ref{main} is to show that if $N$ is a normal network with no near reticulations, then cherries and reticulated cherries are recognisable using only the rooted triples of $N$. The next lemma \cite[Lemma~2]{linz2020caterpillars} shows that \emph{cherries} are recognisable from the set of rooted triples of an arbitrary normal network.

\begin{lem}
\label{l:cherry}
Let $N$ be a normal network on $X$ with $\{a, b\}\subseteq X$.  Then $\{a, b\}$ is a cherry of $N$ if and only if, whenever $xy|z\in R(N)$ and $a, b\in \{x, y, z\}$, we have $\{a, b\}=\{x, y\}$.
\end{lem}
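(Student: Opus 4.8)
The plan is to prove both directions of the biconditional in Lemma~\ref{l:cherry}, characterising cherries by the restriction they impose on rooted triples containing both $a$ and $b$.

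\textbf{Forward direction.} Suppose $\{a, b\}$ is a cherry, so $a$ and $b$ share a common parent $p$. I would argue that any embedding of a rooted triple $xy|z$ with $a, b \in \{x, y, z\}$ must place $a$ and $b$ together on the ``cherry'' side. The key observation is that in a normal (hence tree-child) network, the only arcs into $a$ and into $b$ are $(p, a)$ and $(p, b)$ respectively, since leaves have in-degree one. Thus any embedded path reaching $a$ must pass through $p$, and likewise for $b$. If $a$ and $b$ occupy the two leaves of the rooted triple that are \emph{not} adjacent to the root (i.e.\ $\{a, b\} = \{x, y\}$), the embedding can branch at $p$. If instead one of $a, b$ were the ``outlier'' $z$ while the other is in the cherry, both embedded paths to $a$ and to $b$ would need to use $p$, forcing the embedding to branch at or above $p$ in a way that contradicts the triple topology $xy|z$ placing $z$ apart from the $\{x,y\}$ pair. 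Making this precise is the crux: I would show that because both leaves hang off the single vertex $p$, the least common ancestor of $a$ and $b$ in any embedding is $p$ itself, so $a$ and $b$ can never be separated as ``one in the cherry, one as the outlier.'' Hence whenever such a triple is displayed, $\{a, b\} = \{x, y\}$.

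\textbf{Converse direction.} For the contrapositive, suppose $\{a, b\}$ is \emph{not} a cherry, and I would produce a rooted triple $xy|z \in R(N)$ with $a, b \in \{x, y, z\}$ but $\{a, b\} \ne \{x, y\}$. The natural approach is to locate a third leaf $z$ (or use $a$ or $b$ as the outlier) that is ``closer'' to one of $a, b$ than they are to each other. Since $a$ and $b$ have distinct parents $p_a \ne p_b$, I would consider the least common ancestor structure: because $N$ is tree-child, each of $p_a, p_b$ has a tree path to a leaf, and one can find a leaf $c$ whose embedded path joins the path to $a$ strictly below the point where the $a$- and $b$-paths diverge, yielding $ac|b \in R(N)$ (so here $\{x,y\} = \{a, c\} \ne \{a, b\}$). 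Establishing the \emph{existence} of such a separating leaf $c$ is where I would lean on visibility and the tree-path property of normal networks.

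\textbf{Main obstacle.} I expect the hardest part to be the forward direction's rigorous handling of embeddings when one of $a, b$ is the reticulation-adjacent leaf of the triple, and more generally confirming that no embedding can ``cheat'' using reticulation arcs to separate $a$ from $b$. The essential leverage is that $a$ and $b$ are leaves with a unique common parent $p$, so every directed path to either of them funnels through $p$; this should rule out any embedding that makes $\{a, b\} \ne \{x, y\}$. Since items of this flavour are cited from~\cite{linz2020caterpillars}, I would expect the cleanest route is to invoke the cluster and visibility structure established for normal networks rather than reasoning about embeddings arc-by-arc.
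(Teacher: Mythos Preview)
The paper does not give its own proof of Lemma~\ref{l:cherry}; it is quoted directly from \cite[Lemma~2]{linz2020caterpillars} and simply cited. So there is no in-paper argument to compare your sketch against, only the original reference.

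Your forward direction is essentially complete and is the \emph{easy} half, not the hard one: since $a$ and $b$ are leaves with unique common parent $p$, any embedded path terminating at $a$ (resp.\ $b$) ends with the arc $(p,a)$ (resp.\ $(p,b)$). In an embedding of $xy|z$ the path to $z$ and the path to the pair $\{x,y\}$ are vertex-disjoint below the embedded root, so they cannot both pass through $p$; this forces $\{a,b\}=\{x,y\}$. There is no subtlety with reticulation arcs here because $a$ and $b$ are leaves of in-degree one.

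The converse is where the work actually lives, and your plan understates it. Declaring that you will ``find a leaf $c$ whose embedded path joins the path to $a$ strictly below the divergence point'' hides a genuine case analysis: you must treat separately whether $p_a$ (or $p_b$) is a reticulation, whether the sibling of $a$ under $p_a$ is itself a reticulation (allowed, since $a$ is already a leaf child), and what to do when the tree path you follow from that sibling happens to terminate at $b$ rather than at a third leaf. A clean route is to use acyclicity to assume without loss of generality that $b$ is not a descendant of $p_a$, then exhibit a leaf $c\neq a,b$ below $p_a$ (via the tree-child property applied to the non-$a$ child of $p_a$, or to the child of $p_a$ when $p_a$ is a reticulation), and finally combine a path from the root to $p_a$ with a path from the root to $b$ that avoids $p_a$ to embed $ac|b$. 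None of this is deep, but ``lean on visibility'' does not by itself locate $c$ or dispose of the $c=b$ possibility.
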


For the proof of Theorem~\ref{main}, in addition to recognising reticulated cherries $\{a, b\}$ in a normal network with no near reticulations, where $b$ is the reticulation leaf, we need to recognise the visibility set of the grandparent $g_b$ of $b$ that is not the parent of $a$. The following definition captures sufficient properties of $V_{g_b}$ for this task.

\begin{defn}
\label{d:candidate.set} 
Let $N$ be a normal network with no near reticulations. A \emph{candidate set for $b$} is a non-empty subset $W_b$ of $X-\{a, b\}$ with the following properties:
\begin{enumerate}[{\rm (W1)}]
\item For all $c\in W_b$ and $x\in X-(W_b\cup \{b\})$, we have $bc|x\in R(N)$ and $ac|b\not\in R(N)$.

\item For distinct $c, c'\in W_b$, we have $bc|c'\not\in R(N)$.

\item For all $c\in W_b$ and $x\in X-(W_b\cup \{a, b\})$, we have $cx|a\in R(N)$ if and only if $bx|a\in R(N)$.

\item If $bx|y\in R(N)$ and $ax|y\not\in R(N)$, where $x, y\in X-(W_b\cup \{a, b\})$, then $cx|y\in R$ for all $c\in W_b$.

\item If $c\in W_b$ and $ac|x\in R(N)$ for all $x\in X-(W_b\cup \{a, b\})$, then $ax|c, ax|b\not\in R(N)$.
\end{enumerate}
\end{defn}

\noindent Note that these five properties for $W_b$ are also the properties (ii)--(vi) satisfied by the visibility set of $g_b$ in the context of Lemma~\ref{l:Vgb.properties}.

\begin{lem}
\label{l:retic.cherry}
Let $N$ be a normal network on $X$ with no near reticulations. Let $\{a, b\}\subseteq X$ and let $W_b$ be a candidate set for $b$. Then $\{a, b\}$ is a reticulated cherry of $N$ with reticulation leaf $b$, and $V_{g_b}=W_b$, where $g_b$ is the grandparent of $b$ that is not the parent of $a$, if and only if $ab|x\in R(N)$ for all $x\in X-\{a, b\}$.
\end{lem}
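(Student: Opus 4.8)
The plan is to dispatch the forward direction immediately and then concentrate on the converse, which splits into a structural part (recognising the reticulated cherry) and a set-theoretic part (pinning down $V_{g_b}$). For the forward direction, if $\{a,b\}$ is a reticulated cherry with reticulation leaf $b$ and $V_{g_b}=W_b$, then Lemma~\ref{l:Vgb.properties}(i) gives $ab|x\in R(N)$ for all $x\in X-\{a,b\}$, and there is nothing more to do. So assume $W_b$ is a candidate set and that $ab|x\in R(N)$ for all $x\in X-\{a,b\}$; I must produce the reticulated-cherry structure and then verify $V_{g_b}=W_b$. The first easy observation is that $\{a,b\}$ is not a cherry: since $W_b\neq\emptyset$, choose $c\in W_b$; then (W1) with $x=a$ gives $bc|a\in R(N)$, a displayed triple containing both $a$ and $b$ whose top pair is $\{b,c\}\neq\{a,b\}$, so Lemma~\ref{l:cherry} rules out a cherry.

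The main obstacle is the structural step: showing that the parent $p_b$ of $b$ is a reticulation and that the parent $p_a$ of $a$ is a parent of $p_b$. First I would show $p_b$ is a reticulation. Suppose instead $p_b$ is a tree vertex with second child $w$ (so $w\neq a$, as $\{a,b\}$ is not a cherry). Using tree-child-ness, pick a leaf $x$ in the sibling subtree (a leaf verifying the visibility of $w$, chosen distinct from $a$). Because $b$'s only route upward is through $p_b$ and every path to $x$ also passes through $p_b$, the paths to $b$ and to $x$ can only diverge at $p_b$, which is strictly below where any path to $a$ can join; hence a $bx|a$-type grouping is forced and $ab|x\notin R(N)$, contradicting the hypothesis. (The only delicate point is when the sibling subtree funnels to $a$ alone, which a short argument using in-degrees and the absence of shortcuts excludes.) Thus $p_b$ is a reticulation and $b$ is a reticulation leaf. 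That the \emph{partner} is $a$ --- equivalently that $p_a$ is a parent of $p_b$ --- is where the asymmetry built into (W1) is used: $bc|a\in R(N)$ but $ac|b\notin R(N)$ says that $b$, and not $a$, possesses a second route (toward the subtree containing $c$) independent of the route the two leaves share, which is exactly the route through $p_a$. Combined with acyclicity, the absence of shortcuts, and the no-near-reticulation hypothesis (to prevent the relevant parents from being near-sibling or near-stack reticulations), this pins down that one parent of $p_b$ is $p_a$. Let $g_b$ denote the other parent of $p_b$.

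It remains to show $V_{g_b}=W_b$, and here the structural picture makes both inclusions manageable. For $W_b\subseteq V_{g_b}$, take $c\in W_b$. By (W1), $bc|a\in R(N)$; reaching $b$ through $g_b$ (reaching it through $p_a$ would group it with $a$) and noting that $p_a$ cannot be an ancestor of $g_b$ without creating a shortcut, any embedding of $bc|a$ forces $c\in C_{g_b}$. If $c\in C_{g_b}\setminus V_{g_b}$, then some path to $c$ avoids $g_b$, and routing $a$ through $p_a$, $c$ along this path, and $b$ through $g_b$ yields an embedding of $ac|b$, contradicting the second half of (W1); hence $c\in V_{g_b}$. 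For the reverse inclusion, suppose some $c\in V_{g_b}\setminus W_b$ and pick $c'\in W_b\subseteq V_{g_b}$. Then (W1), with its first leaf equal to $c'$ and $x=c$, gives $bc'|c\in R(N)$, while Lemma~\ref{l:Vgb.properties}(iii), applied to the distinct elements $c',c\in V_{g_b}$, gives $bc'|c\notin R(N)$, a contradiction. Therefore $V_{g_b}=W_b$, which completes the proof.
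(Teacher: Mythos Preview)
There is a genuine gap in your argument that $p_b$ is a reticulation. You assume that if $p_b$ is a tree vertex with second child $w$, then a leaf $x$ verifying the visibility of $w$ has \emph{every} path from the root passing through $p_b$, whence $ab|x\notin R(N)$. This is false when $w$ is itself a reticulation: then $w$ has a second parent $p_w\neq p_b$, and the route $\rho\to\cdots\to p_w\to w\to\cdots\to x$ avoids $p_b$ entirely, so $ab|x$ can perfectly well lie in $R(N)$ and your contradiction disappears. Your parenthetical about the sibling subtree ``funnelling to $a$ alone'' addresses a different edge case and does not touch this one. The case where $w$ is a reticulation is in fact the heart of the lemma: it is the only place in the paper's proof where properties (W3) and (W5) of the candidate set are invoked, and it leans essentially on the no-near-reticulation hypothesis (to force the grandparent $h_b$ of $b$ to be a tree vertex whose other child is not a reticulation, producing an auxiliary leaf $z$ against which the (W$i$) can be played). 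That your step~2 never calls on (W3), (W5), or the absence of near reticulations is a strong signal that the hard case has been skipped.

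Your later steps are also too thin to stand. The claim that ``the asymmetry in (W1) pins down that one parent of $p_b$ is $p_a$'' is a heuristic, not an argument; the paper requires a case analysis on the two grandparents of $b$, together with (W4) and the no-near-reticulation hypothesis, and it establishes $C_{p_i}=\{a,b\}$ and $W_b=V_{p_j}$ simultaneously rather than sequentially. Finally, in your last step the assertion that ``any embedding of $bc|a$ forces $c\in C_{g_b}$'' is unjustified: you correctly observe that the $b$-path must use $(g_b,p_b)$, but nothing prevents the merge of the $b$- and $c$-paths from occurring strictly above $g_b$, with the $a$-path joining even higher, so $c\in C_{g_b}$ does not follow from this alone.
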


\begin{proof}
For the forward direction, if $\{a, b\}$ is a reticulated cherry, then, by Lemma~\ref{l:Vgb.properties}(i), $ab|x\in R(N)$ for all $x\in X-\{a, b\}$. For the backward direction, suppose that $ab|x\in R(N)$ for all $x\in X-\{a, b\}$. We first show that the parent $p_b$ of $b$ is a reticulation.

Suppose to the contrary that $p_b$ is a tree vertex. Let $v$ denote the child of $p_b$ that is not $b$.
Over the next several sub-lemmas, we deduce consequences of the assumption that $p_b$ is a tree vertex.  The first is that this implies that $v$ is a reticulation.

\begin{sublemma}
$v$ is a reticulation.
\label{reticulation}
\end{sublemma}

If $v$ is a leaf, then $v=a$; otherwise, $ab|v\not\in R(N)$, a contradiction. But if $v=a$, then, $\{a, b\}$ is a cherry and we cannot have $bc|a\in R(N)$, where $c\in W_b$, contradicting (W1) for $W_b$.  So $v$ is not a leaf.  

Suppose instead that $v$ is a tree vertex. Since $N$ is normal, there is a tree path from $v$ to a leaf, say $\ell$. If $a\neq \ell$, then either $a\ell|b\in R(N)$ or $b\ell|a\in R(N)$ but $ab|\ell\not\in R(N)$, a contradiction. So $a=\ell$. But then, similarly, $bc|a\not\in R(N)$, where $c\in W_b$, contradicting (W1) for $W_b$. Therefore $v$ is a reticulation and (\ref{reticulation}) holds.

Let $p_v$ denote the parent of $v$ that is not $p_b$, and let $\ell$ and $m$ be leaves at the end of tree paths starting at $v$ and $p_v$, respectively, as shown in Figure~\ref{f:scenarios.a.c.in.Cv}(a).  
Observe that $\ell\neq m$. Let $h_b$ denote the parent of $p_b$ in $N$. Since $N$ has no near-stack reticulations, $h_b$ is a tree vertex or the root of $N$. Furthermore, as $N$ has no near-sibling reticulations, the child of $h_b$ that is not $p_b$ is either a tree vertex or a leaf. Thus there is a tree path from $h_b$ to a leaf, say $z$, avoiding $p_b$. In particular, $z\neq b$. We next consider four cases depending on whether $a\in C_v$ and whether $W_b\cap C_v$ is non-empty.

\begin{figure}[ht]
\begin{tikzpicture}[sdot/.style={circle,fill,radius=1pt,inner sep=1pt}]
        \node[sdot,label={below:{$b$}}] (b) {}; 
        \node[sdot,above right=15mm of b,label={right:{$p_b$}}] (parent) {}; 
        \node[sdot,below right=15mm of parent,label={right:{$v$}}] (v) {}; 
        \node[sdot,above=10mm of parent,label={right:{$h_b$}}] (gp) {}; 
        \node[above=6mm of gp] (gpp) {}; 
        \node[below left=10mm of gp,label={left:{$z$}}] (z) {}; 
        \node[sdot,above right=10mm of v,label={right:{$p_v$}}] (vp) {}; 
        \node[above=6mm of vp] (vpp) {}; 
        \node[below right=10mm of vp,label={below:{$m$}}] (vpc) {}; 
        \node[below=10mm of v,label={left:{$\ell$}}] (ell) {}; 

        \draw[] (b) to (parent);
        \draw (gpp)--(gp)--(parent)--(v)--(vp); 
        \draw[decorate,decoration={snake,amplitude=1mm,segment length=3mm}] (v.center) to (ell);
        \draw (vpp)--(vp);
        \draw[decorate,decoration={snake,amplitude=1mm,segment length=3mm}] (vp.center) to (vpc.center);
        \draw[decorate,decoration={snake,amplitude=1mm,segment length=3mm}] (gp.center) to (z.center);

        \node[below=6mm of ell] () {(a)};

\begin{scope}[xshift=7cm]
\node[sdot,label={below:{$b$}}] (b) {};

\node[sdot,label={below:{$b$}}] (b) {}; 
\node[sdot,above right=15mm of b,label={right:{$p_b$}}] (parent) {}; 
\node[sdot,below right=15mm of parent,label={right:{$v$}}] (v) {}; 
\node[sdot,above=30mm of v,label={right:{$h_b$}}] (gp) {}; 
\node[above=6mm of gp] (gpp) {}; 
\node[sdot,above right=15mm of v,label={right:{$p_v$}}] (vp) {}; 
\node[below right=10mm of vp,label={below:{$m$}}] (vpc) {}; 
\node[below=10mm of v,label={below:{$\ell$}}] (ell) {}; 

\node[sdot,below right=5mm of gp,label={right:{$w$}}] (w) {};

\draw[] (b) to (parent);
\draw (gpp)--(gp) (parent)--(v)--(vp); 
\draw (gp) to[out=-150,in=85] (parent);
\draw[decorate,decoration={snake,amplitude=1mm,segment length=3mm}] (v.center) to (ell);
\draw[decorate,decoration={snake,amplitude=1mm,segment length=3mm}] (vp.center) to (vpc.center);
\draw (gp) to (w);
\draw[dashed] (w) to[out=-45,in=95] (vp.center);

\node[below=6mm of ell] () {(b)};
\end{scope}

\end{tikzpicture}  
\caption{Scenarios for (\ref{first}), (\ref{second}), (\ref{third}), and (\ref{sublem:pb.retic}), in which it is assumed that $p_b$ is a tree vertex.  The parent $h_b$ of $p_b$ must also be a tree vertex, otherwise $N$ has a near-stack reticulation. 
(a) There is a tree path from $h_b$ to a leaf $z$, and from $p_v$ to a leaf $m$. (b) The scenario in which $h_b$ is an ancestor of $p_v$, in which case there is a tree path from $h_b$ to $z$ traversing $w$.
}
\label{f:scenarios.a.c.in.Cv}
\label{f:hb.above.pv}
\end{figure}
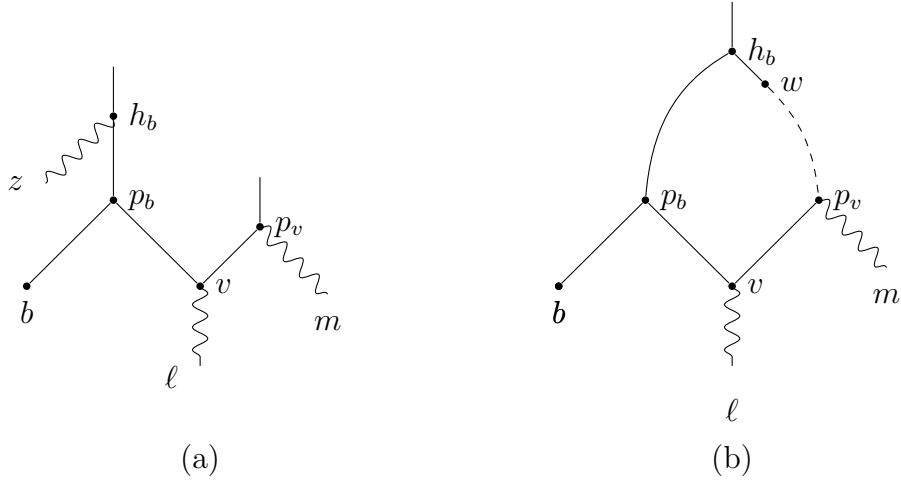

\begin{sublemma}
Either $a\not\in C_v$ or $W_b\cap C_v$ is empty.
\label{first}
\end{sublemma}

If $a\in C_v$ and there exists $c\in W_b\cap C_v$, then $ac|b\in R(N)$, contradicting (W1) for $W_b$. Thus (\ref{first}) holds.

\begin{sublemma}
Either $a\in C_v$ or $W_b\cap C_v$ is non-empty.
\label{second}
\end{sublemma}

Assume that $a\not\in C$ and $W_b\cap C_v$ is empty. Say $h_b$ is not an ancestor of $p_v$. Since $ab|x\in R(N)$ for all $x\in X-\{a, b\}$, it follows that $a\in C_{h_b}-(C_v\cup \{b\})$. In particular, $z=a$; otherwise, $ab|z\not\in R(N)$. But then, as $W_b\cap C_v$ is empty, either $ac|b\in R(N)$ or $ab|c\in R(N)$ but $bc|a\not\in R(N)$, contradicting (W1) for $W_b$. Now say $h_b$ is an ancestor of $p_v$, as in Figure~\ref{f:scenarios.a.c.in.Cv}(b), and note that $m$ and $z$ need not be distinct. Again, as $ab|x\in R(N)$ for all $x\in X-\{a, b\}$, we have $z=a$. But, as $W_b\cap C_v$ is empty, $bc|a\not\in R(N)$ for all $c\in W_b$, contradicting (W1) for $W_b$. Thus (\ref{second}) holds.

It now follows that either $a\not\in C_v$ and $W_b\cap C_v$ is non-empty, or $a\in C_v$ and $W_b\cap C_v$ is empty.

\begin{sublemma}
$a\in C_v$ and $W_b\cap C_v$ is empty.
\label{third}
\end{sublemma}

Assume that $a\not\in C_v$ and $W_b\cap C_v$ is non-empty. Note that $W_b\subseteq C_v$; otherwise, there is a $c'\in W_b$ such that $bc|c'\in R(N)$, contradicting (W2) for $W_b$. Say $h_b$ is not an ancestor of $p_v$. If $a\neq z$, then, as $a\not\in C_v$, it follows that  $ab|z\not\in R(N)$. So $a=z$. If $c\in W_b\cap C_v$, then $cm|a\in R(N)$, but $bm|a\not\in R(N)$, contradicting (W3) for $W_b$.

Now say $h_b$ is an ancestor of $p_v$ as in Figure~\ref{f:scenarios.a.c.in.Cv}(b) and, again, note that $m$ and $z$ may not be distinct.
Consider $a$, $b$, and $z$.  If $a\neq z$, then $ab|z\not\in R(N)$, a contradiction, and so $a=z$.  But then $ac|b\in R(N)$, where $c\in W_b$, contradicting (W1) for $W_b$. Hence~(\ref{third}) holds.

We now work towards a contradiction that will allow us to conclude that $p_b$ must be a reticulation. By (\ref{third}),  $a\in C_v$ and $W_b\cap C_v$ is empty. First assume that $h_b$ is not an ancestor of~$p_v$.
Consider $a$, $b$, and $z$. If $z\not\in W_b$, then, for all $c\in W_b$, we have $bc|z\not\in R(N)$ as $W_b\cap C_v$ is empty, contradicting (W1) for $W_b$. So $z\in W_b$.  Also, if there is a $t\in V_{h_b}-(W_b\cup \{b\})$, then $bc|t\not\in R(N)$ for all $c\in W_b$, a contradiction.  So $V_{h_b}\subseteq W_b\cup \{b\}$.

We proceed by showing that $ac|x\in R(N)$ for all $x\in X-(W_b\cup \{a, b\})$, where $c=z$, and then use (W5) for $W_b$ to get a contradiction. Let $y\in X-(W_b\cup \{a, b\})$. By assumption, $ab|y\in R(N)$, and so there is an embedding $E_{aby}$ of $ab|y$ in $N$. Label the arcs of the rooted triple $ab|y$ by $f_a$, $f_b$, $f_{ab}$, and $f_y$, where $f_a$, $f_b$, and $f_y$ are the pendant edges incident with $a$, $b$ and $y$, respectively (along the lines of the arc-labelling in Figure~\ref{f:edge.labels}). If $E_{aby}$ uses the arc $(p_b,v)$, and so $f_{ab}$ uses $(h_b, p_b)$ in $E_{aby}$, then it is straightforward to modify $E_{aby}$ to get an embedding of $ac|y$ in $N$ unless $f_y$ meets $f_{ab}$ at $h_b$. In this case, as $y\not\in V_{h_b}$, there is a path $P_y$ from the root of $N$ to $y$ avoiding $h_b$. Let $P_c$ denote the tree path from $h_b$ to $c$, and let $P_a$ denote the path that is the union of $\{(h_b, p_b)\}$ and the subset of arcs of $E_{aby}$ corresponding to $f_a$. If $P_y$ and $P_c$ meet, then $P_c$ contains a reticulation, a contradiction. Hence if $P_{h_b}$ is a path from the root of $N$ to $h_b$, it is now easily seen that
$$E(P_a)\cup E(P_c)\cup E(P_{h_b})\cup E(P_y)$$
contains an embedding of $ac|y$ in $N$, and so $ac|y\in R(N)$. If $E_{aby}$ does not use the arc $(p_b, v)$, then $f_b$ uses $h_b$ in $E_{aby}$. So replace the arcs corresponding to $f_a$ in $E_{aby}$ with the arcs in the tree path from $h_b$ to $c$ to get an embedding of $ac|y$ in $N$. It now follows that $ac|x\in R(N)$ for all $x\in X-(W_b\cup \{a, b\})$, contradicting (W5) for $W_b$.

Now assume that $h_b$ \emph{is} an ancestor of $p_v$, as in Figure~\ref{f:scenarios.a.c.in.Cv}(b).
If $z\neq c$, then, as $W_b\cap C_v$ is empty, $bc|z\not\in R(N)$, a contradiction to (W2) for $W_b$.  So $z=c$ and, as $a\in C_v$, we have $ac|b\in R(N)$, contradicting (W1) for $W_b$.

We conclude that $p_b$ is not a tree vertex, and so we have (\ref{sublem:pb.retic}):
\begin{sublemma}
\label{sublem:pb.retic}
$p_b$ is a reticulation.
\end{sublemma}

Let $p_1$ and $p_2$ denote the parents of $p_b$, and let $\ell_1$ and $\ell_2$ denote the leaves at the end of tree paths starting at $p_1$ and $p_2$, respectively, as shown in Figure~\ref{f:wheres.a}(a). To complete the proof of the lemma, it remains to show that, for $\{i, j\}=\{1, 2\}$, the vertex $p_i$ is the parent of $a$ and $V_{p_j}=W_b$.

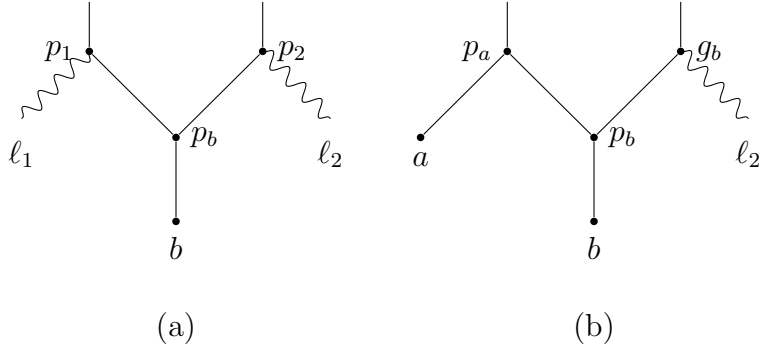
\begin{figure}[ht]
\begin{tikzpicture}[sdot/.style={circle,fill,radius=1pt,inner sep=1pt}]
\node[sdot,label={below:{$b$}}] (b) {}; 
\node[sdot,above=10mm of b,label={right:{$p_b$}}] (parent) {}; 
\node[sdot,above right=15mm of parent,label={right:{$p_2$}}] (p2) {}; 
\node[sdot,above left=15mm of parent,label={left:{$p_1$}}] (p1) {}; 
\node[above=6mm of p2] (p2p) {}; 
\node[above=6mm of p1] (p1p) {}; 
\node[below right=10mm of p2,label={below:{$\ell_2$}}] (ell2) {}; %
\node[below left=10mm of p1,label={below:{$\ell_1$}}] (ell1) {};
        
\draw (p2p)--(p2)--(parent)--(p1)--(p1p) (b)--(parent);
\draw[decorate,decoration={snake,amplitude=1mm,segment length=3mm}] (p2.center) to (ell2.center);
\draw[decorate,decoration={snake,amplitude=1mm,segment length=3mm}] (p1.center) to (ell1.center);
\node[below=10mm of b] () {(a)};

\begin{scope}[xshift=55mm]
\node[sdot,label={below:{$b$}}] (b) {}; 
\node[sdot,above=10mm of b,label={right:{$p_b$}}] (parent) {}; 
\node[sdot,above right=15mm of parent,label={right:{$g_b$}}] (p2) {}; 
\node[sdot,above left=15mm of parent,label={left:{$p_a$}}] (p1) {}; 
\node[above=6mm of p2] (p2p) {}; 
\node[above=6mm of p1] (p1p) {}; 
\node[below right=10mm of p2,label={below:{$\ell_2$}}] (ell2) {}; %
\node[sdot,below left=15mm of p1,label={below:{$a$}}] (ell1) {}; %
        
\draw (p2p)--(p2)--(parent)--(p1)--(p1p) (b)--(parent);
\draw[decorate,decoration={snake,amplitude=1mm,segment length=3mm}] (p2.center) to (ell2.center);
\draw (p1)--(ell1);
\node[below=10mm of b] () {(b)};

\end{scope}

\end{tikzpicture}
\caption{Illustration of scenarios for the argument after the parent $p_b$ of $b$ has been identified as a reticulation.  (a) Once $b$ is identified as a reticulation, we need to be able to use the rooted triples to show that for some $i\in \{1, 2\}$, the vertex $p_i$ is the parent of $a$ and (b) to show that, for the other parent of $p_b$, its visibility set is $W_b$.}
\label{f:wheres.a}
\end{figure}

\begin{sublemma}
If $W_b\cap V_{p_i}$ is non-empty for some $i\in \{1, 2\}$, then $W_b\subseteq V_{p_i}$.
\label{contained}
\end{sublemma}

Say $W_b\cap V_{p_i}$ is non-empty for some $i\in \{1, 2\}$. If there is an element $c'\in W_p-V_{p_i}$, then $bc|c'\in R(N)$, where $c\in W_b\cap V_{p_i}$, contradicting (W2). So (\ref{contained}) holds.

\begin{sublemma}
If $a\in C_{p_i}$ for some $i\in \{1, 2\}$, then $C_{p_i}=\{a, b\}$ and $W_b=V_{p_j}$, where $\{i, j\}=\{1, 2\}$.
\label{cluster1}
\end{sublemma}

Say $a\in C_{p_i}$. If $\ell_j\not\in W_b$, then, as $bc|\ell_j\in R(N)$, an embedding $E_{bc\ell_j}$ of $bc|\ell_j$ in $N$ must use $(p_i, p_b)$, where $c\in W_b$. But then we can modify $E_{bc\ell_j}$ to get an embedding of $ac|\ell_j$ that uses $p_i$ which in turn can be modified to get an embedding of $ac|b$ in $N$, a contradiction to (W1) for $W_b$. Thus $\ell_j\in W_b$. In particular, $V_{p_j}\subseteq W_b$, and so, by (\ref{contained}), $V_{p_j}=W_b$. 

Now consider $C_{p_i}$. If $a\not\in V_{p_i}$, then $a\neq \ell_i$ and an embedding $E_{ab\ell_i}$ of $ab|\ell_i$ in $N$ uses $(p_j, p_b)$. But then $ac|\ell_i\in R(N)$ and, in turn, $ac|b\in R(N)$, where $c\in W_b$, a contradiction. Thus $a\in V_{p_i}$. If $|V_{p_i}|\ge 2$, then, for some element $z\in V_{p_i}-\{a\}$, we have $ab|z\not\in R(N)$, a contradiction. So $|V_{p_i}|=1$ and $V_{p_i}=\{a\}$. If $|C_{p_i}|\ge 3$, then the child of $p_i$ that is not $p_b$, say $q_i$, is a tree vertex as $N$ has no sibling reticulations. If there is a tree path from $q_i$ to a leaf other than $a$, then $|V_{p_i}|\ge 2$, a contradiction. So $q_i$ is the parent of a reticulation, and this reticulation and $p_b$ are near-sibling reticulations, a contradiction. Hence $C_{p_i}=\{a, b\}$, and so (\ref{cluster1}) holds.

\begin{sublemma}
If $W_b\cap C_{p_i}$ is non-empty for some $i\in\{1, 2\}$, then $W_b=V_{p_i}$ and $C_{p_j}=\{a, b\}$, where $\{i, j\}=\{1, 2\}$.
\label{cluster2}
\end{sublemma}

Say $W_b\cap C_{p_i}$ is non-empty. If $a\neq \ell_j$, then an embedding $E_{ab\ell_j}$ of $ab|\ell_j$ in $N$ uses $(p_i, p_b)$. But then we can modify $E_{ab\ell_j}$ to get an embedding of $ac|\ell_j$ that uses $p_i$, which in turn gives an embedding of $ac|b$ in $N$, a contradiction to (W1) for $W_b$. So $a=\ell_j$. If $|V_{p_j}|\ge 2$, then there is an element $z\in V_{p_j}-\{a\}$ and $ab|z\not\in R(N)$, a contradiction. So $|V_{p_j}|=1$. If $C_{p_j}-\{a, b\}$ is non-empty, then the child of $p_j$ that is not $p_b$ is a tree vertex and, as $N$ has no near-sibling reticulations, it follows that there is a tree path from $p_j$ to a leaf that is not $a$, contradicting $|V_{p_j}|=1$. Hence $C_{p_j}=\{a, b\}$. By~(\ref{cluster1}), $W_b=V_{p_i}$, and (\ref{cluster2}) holds.

We complete the proof of the lemma by showing that, for some $i\in\{1, 2\}$, either $a\in C_{p_i}$ or $W_b\cap C_{p_i}$ is non-empty and then apply either (\ref{cluster1}) or (\ref{cluster2}). First assume that $p_1$ and $p_2$ have the same parent. If $a\not\in C_{p_1}\cup C_{p_2}$, then $ab|\ell_1\not\in R(N)$, a contradiction. Thus either $a\in C_{p_1}$ or $a\in C_{p_2}$, and the lemma follows by (\ref{cluster1}).

We may now assume that $p_1$ and $p_2$ do not have the same parent. Let $h_1$ and $h_2$ denote the parents of $p_1$ and $p_2$. Since $N$ has no near-stack reticulations, $h_1$ and $h_2$ are both tree vertices and so, as $N$ is acyclic, either $h_1$ is not an ancestor of $p_2$ or $h_2$ is not an ancestor of $p_1$. Without loss of generality, we may assume that $h_1$ is not an ancestor of $p_2$. Since $N$ has no near-sibling reticulations, the child of $h_1$ that is not $p_1$ is a tree vertex or a leaf, and so there is a tree path starting at $h_1$, avoiding $p_1$, and ending at a leaf $m_1\neq \ell_1$.

If $m_1\in W_b\cup \{a\}$, then either $ab|\ell_1\not\in R(N)$ or $bc|\ell_1\not\in R(N)$ for some $c\in W_b$, a contradiction. So $m_1\not\in W_b\cup \{a\}$. If $\ell_1\not\in W_b\cup \{a\}$, then, as $b\ell_1|m_1\in R(N)$ and $W_b$ is a candidate set for $b$, it follows by (W4) for $W_b$ that either $a\ell_1|m_1\in R(N)$ or $c\ell_1|m_1\in R(N)$ for all $c\in W_b$. This implies that $(W_b\cup \{a\})\cap C_{p_1}$ is non-empty and the lemma follows by (\ref{cluster1}) and (\ref{cluster2}). This completes the proof of Lemma~\ref{l:retic.cherry}.
\end{proof}

\section{Reconstructing Normal Networks From Rooted Triples}
\label{proof}

In this section, we prove Theorem~\ref{main}. We begin with a lemma that allows us to use induction for its proof.

\begin{lem}
Let $N$ be a normal network on $X$ with no near reticulations, and let $\{a, b\}$ be a reticulated cherry of $N$ with reticulation leaf $b$.  Let $N'$ be the phylogenetic network obtained from $N$ by deleting $b$ and its parent, and suppressing the resulting two vertices of degree~$2$. Then $N'$ is a normal network on $X-\{b\}$ with no near reticulations.
\label{still-normal}
\end{lem}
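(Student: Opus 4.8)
The plan is to track precisely how $N'$ differs from $N$ and then verify the four requirements in turn---that $N'$ is a valid binary phylogenetic network, that it is tree-child, that it has no shortcuts, and that it has no near reticulations---with only the last being delicate.

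First I would fix notation for the local structure. Since $\{a,b\}$ is a reticulated cherry with reticulation leaf $b$, the parent $p_b$ of $b$ is a reticulation whose parents are $p_a$ (the parent of $a$, necessarily a tree vertex) and $g_b$. I would begin by showing $g_b$ is a tree vertex: it cannot be a reticulation, since then $g_b$ and $p_b$ would be stack reticulations, contradicting that $N$ is tree-child (Lemma~\ref{tree-child}); and it cannot be the root, since the root reaches $p_a$ by a directed path that necessarily avoids $(g_b,p_b)$ (whose head $p_b$ leads only to the leaf $b$), so appending $(p_a,p_b)$ would exhibit $(g_b,p_b)$ as a shortcut, contradicting normality. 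Writing $h_a$ for the parent of $p_a$, $h$ for the parent of $g_b$, and $w$ for the child of $g_b$ other than $p_b$ (a tree vertex or a leaf, since $N$ is tree-child and $p_b$ is a reticulation), deleting $b$ and $p_b$ leaves exactly $p_a$ and $g_b$ with degree two, and suppressing them replaces the affected arcs by the two arcs $(h_a,a)$ and $(h,w)$. I would note the relevant non-degeneracies in passing (for instance $w\neq p_a$ and $h_a\neq g_b$, each of which would otherwise create a shortcut in $N$), so that these two arcs are genuinely new and $N'$ is an acyclic digraph with a single root, leaf set $X-\{b\}$, and correct vertex degrees.

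Next I would verify tree-child directly from the definition. The only vertices whose child sets change are $h_a$, which acquires the leaf $a$ as a child, and $h$, which acquires $w$ (a tree vertex or leaf) as a child; every other vertex keeps its children, and hence its tree-vertex-or-leaf child, from $N$. Thus every non-leaf vertex of $N'$ still has a child that is a tree vertex or a leaf. For the absence of shortcuts, I would observe that the two new arcs are tree arcs (their heads $a$ and $w$ are not reticulations), and that the only reticulation arcs of $N$ destroyed by the operation are $(p_a,p_b)$ and $(g_b,p_b)$; consequently every reticulation arc of $N'$ is a reticulation arc of $N$. If such an arc $(u,r)$ admitted an alternative directed path in $N'$, I would lift that path to $N$ by expanding any use of $(h_a,a)$ and $(h,w)$ into $h_a\to p_a\to a$ and $h\to g_b\to w$; the result is an alternative directed path from $u$ to $r$ in $N$, making $(u,r)$ a shortcut in $N$, a contradiction.

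The main obstacle is showing $N'$ has no near reticulations, since suppression could conceivably bring two reticulations into proximity. Here I would use that near-sibling and near-stack reticulations are specified by a bounded set of explicit arcs, and that every surviving vertex keeps its tree-vertex/reticulation status. If a near-reticulation of $N'$ used only arcs of $N$, then, since a tree vertex whose two child-arcs are old has exactly those two children in $N$, the identical structure would already be a near reticulation of $N$, which is excluded. Hence such a structure must use one of the two new arcs. The arc $(h_a,a)$ cannot occur in it, because every arc of a near-reticulation has a reticulation or tree-vertex head whereas $a$ is a leaf. Finally, if $(h,w)$ occurs, then matching heads and tails forces $w$ to play the role of the intermediate tree vertex that is a parent of some reticulation $r$; but then in $N$ the tree vertex $g_b$ has children $p_b$ (a reticulation) and $w$ (a tree vertex that is a parent of the reticulation $r$), so $p_b$ and $r$ are near-sibling reticulations of $N$, a contradiction. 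Assembling these four points yields that $N'$ is a normal network on $X-\{b\}$ with no near reticulations.
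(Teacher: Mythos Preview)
Your proposal is correct and follows essentially the same route as the paper for the key step (no near reticulations): both arguments observe that any near-reticulation pattern in $N'$ must involve the suppression of $g_b$, and derive a contradiction by exhibiting a forbidden configuration at $g_b$ in $N$. The paper phrases this as ``$g_b$ subdivides one of the arcs of the pattern'', you phrase it as ``the pattern uses the new arc $(h,w)$''; these are the same case analysis, and your uniform conclusion that $w$ must be the intermediate tree vertex (forcing $p_b$ and $r$ to be near-sibling in $N$) neatly covers both the near-sibling and near-stack cases at once.

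The only notable difference is that the paper outsources normality of $N'$ to \cite[Lemma~3.2]{bordewich2018recovering}, whereas you verify tree-child and the absence of shortcuts directly. Your direct arguments are sound (in particular, the lifting of a hypothetical shortcut path back to $N$ is valid since the inserted vertices $p_a,g_b$ are absent from $N'$), so this simply makes your version self-contained at the cost of a bit more length.
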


\begin{proof}
It follows by~\cite[Lemma~3.2]{bordewich2018recovering} that $N'$ is normal. To see that $N'$ has no near reticulations, suppose to the contrary that $u$ and $v$  are either near-sibling or near-stack reticulations of $N'$. Let $p_a$ and $p_b$ denote the parents of $a$ and $b$, respectively, in $N$ and let $g_b$ denote the parent of $p_b$ that is not $p_a$. Say $u$ and $v$ are near-sibling reticulations of $N'$, where the parent $p_v$ of $v$ is the parent of a parent $p_u$ of $u$. Then, as $N$ has no near reticulations, to obtain $N$ from $N'$ it follows that $g_b$ subdivides one $(p_v, p_u)$, $(p_u, u)$, and $(p_v, v)$ in $N$. If $g_b$ subdivides either $(p_u, u)$ or $(p_v, v)$, then either $u$ and $p_b$ or $v$ and $p_b$ are sibling reticulations in $N$, a contradiction. If $g_b$ subdivides $(p_v, p_u)$, then $v$ and $p_b$ are near-sibling reticulations in $N$, another contradiction. So $u$ and $v$ are not near-sibling reticulations. Now say $u$ and $v$ are near-stack reticulations of $N'$, where the child of $u$ is a parent $p_v$ of $v$. Since $N$ has no near reticulations, to obtain $N$ from $N'$, the vertex $g_b$ subdivides either $(u, p_v)$ or $(p_v, v)$ in $N$. But then either $u$ and $p_b$ are near-stack reticulations or $v$ and $p_b$ are sibling reticulations, respectively, in $N$. This last contradiction implies that $N'$ has no near reticulations, thereby completing the proof of the lemma.
\end{proof}

We now prove Theorem~\ref{main}.

\begin{proof}[Proof of Theorem~\ref{main}]
Let $N_1$ and $N_2$ be two normal networks on $X$ with no near reticulations. If $N_1\cong N_2$, then $R(N_1)=R(N_2)$. The proof of the converse is by induction on $|X|$. 

Suppose that $R(N_1)=R(N_2)$. If $|X|\in \{1, 2\}$, then neither $N_1$ nor $N_2$ has any reticulations and so $N_1\cong N_2$, and the theorem holds. Now suppose that $|X|\ge 3$ and that the theorem holds for all normal networks with no near reticulations on $|X|-1$ leaves. By Lemma~\ref{cherries}, $N_1$ has either a cherry or a reticulated cherry. If $N_1$ has a cherry $\{a, b\}$, then, by Lemma~\ref{l:cherry}, $\{a, b\}$ is recognised by $R(N_1)$. Since $R(N_1)=R(N_2)$, Lemma~\ref{l:cherry} also implies that $\{a, b\}$ is a cherry of $N_2$. Let $N'_1$ and $N'_2$ denote the normal networks obtained from $N_1$ and $N_2$, respectively, by deleting $b$ and suppressing the resulting degree-two vertex. 
Since neither $N_1$ nor $N_2$ has a near reticulation, we have from Lemma~\ref{still-normal} that neither $N'_1$ nor $N'_2$ has a near reticulation. 
Furthermore, as $R(N_1)=R(N_2)$, it follows that $R(N'_1)=R(N'_2)$, in particular, for each $i\in \{1, 2\}$,
$$R(N'_i) = R(N_i)-\{bx|y: x, y\in X-\{b\}\}.$$
Therefore, by induction, $N'_1\cong N'_2$. For each $i\in \{1, 2\}$, as $\{a, b\}$ is a cherry of $N_i$, the only way $b$ can be adjoined to $N'_i$ to get $N_i$ is to subdivide the arc directed into $a$ and adjoin $b$ to this new vertex with a new arc. Thus $N_1\cong N_2$.

Now assume that $N_1$ has a reticulated cherry $\{a, b\}$ with reticulation leaf $b$. By Lemma~\ref{l:retic.cherry}, this reticulated cherry, as well as the visibility set $V_{g_b}$ of the grandparent $g_b$ of $b$ that is not the parent of $a$, is recognised by $R(N_1)$. Since $R(N_1)=R(N_2)$, it follows that $\{a, b\}$ is also a reticulated cherry of $N_2$ with reticulation leaf $b$ and the visibility set of the grandparent of $b$ that is not the parent of $a$ is $V_{g_b}$. Let $N'_1$ and $N'_2$ denote the phylogenetic networks obtained from $N_1$ and $N_2$, respectively, by deleting $b$ and its parent, and suppressing the two resulting degree-two vertices. By Lemma~\ref{still-normal}, $N'_1$ and $N'_2$ are normal networks with no near reticulations. For each $i\in \{1, 2\}$, as the rooted triples displayed by $N'_i$ are exactly the rooted triples displayed by $N_i$ not having $b$ as a leaf, $R(N'_1)=R(N'_2)$. Therefore, by induction, $N'_1\cong N'_2$.

Let $p_a$ and $p_b$ denote the parents of $a$ and $b$, respectively, in $N_1$, and recall that $g_b$ is the parent of $p_b$ that is not $p_a$ in $N_1$. Let $e_1=(p_a, p_b)$ and $f_1=(g_b, p_b)$ denote the reticulation arcs in $N_1$ directed into $p_b$. Since $N'_1\cong N'_2$, to show that $N_1\cong N_2$, it suffices to show that there is exactly one arc in $N'_1$ to adjoin $e_1$ and exactly one arc in $N'_1$ to adjoin $f_1$. Since, up to suppressing degree-two vertices, $\{a, b\}$ is a cherry of the normal network obtained from $N_1$ by deleting $f_1$, the only arc to adjoin $e_1$ to in $N'_1$ is the arc directed into $a$.

We now consider the possible arcs of $N'_1$ to adjoin $f_1$ to. Since $g_b$ is a parent of the reticulation $p_b$ in $N_1$, it follows that the child of $g_b$ that is not $p_b$ is a tree vertex or a leaf, and so it has visibility set $V_{g_b}$. Thus there is at least one vertex in $N'_1$ with visibility set $V_{g_b}$. Let $U$ denote the subset of vertices of $N'_1$ with visibility sets $V_{g_b}$. Let $u\in U$. Since $N'_1$ is acyclic, we may assume that $u$ is chosen so that no ancestor of $u$ is in $U$.

\begin{sublemma}
$|U|\le 2$ and, if $U=\{u, u'\}$ and $u$ is a tree vertex, then $u'$ is a tree vertex child of $u$ and the other child of $u$ is a reticulation.
\label{most2}
\end{sublemma}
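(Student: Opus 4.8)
The plan is to show that the vertices of $U$ form a chain under the ancestor relation, that this entire chain lies on a single tree path descending from $u$, and then to use the no-near-reticulation hypothesis to cap the chain length at two. First I would record that any two vertices sharing a visibility set are comparable: if $w,w'\in U$ were incomparable, then choosing any $\ell\in V_{g_b}$ (which is non-empty since $N_1'$ is tree-child and hence every vertex is visible by Lemma~\ref{tree-child}), every directed path from the root to $\ell$ would have to traverse both $w$ and $w'$; but a single directed path cannot contain two incomparable vertices, a contradiction. Thus $U$ is a chain with maximal element $u$.

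Next I would fix a tree path $P=(u=t_0,t_1,\ldots,t_m=\ell_0)$ from $u$ to a leaf $\ell_0$; since the non-terminal vertices of a tree path are tree vertices, each of in-degree one, every root-to-$\ell_0$ path traverses $u$, so $\ell_0\in V_u=V_{g_b}$. Every other member of $U$ is a proper descendant of $u$ lying on every root-to-$\ell_0$ path, and cannot lie on the root-to-$u$ segment; hence it lies on $P$. So all of $U$ is contained in $P$, each member is a tree vertex or the leaf $\ell_0$, and the visibility sets are non-increasing along $P$ (each step descends to an in-degree-one child, so $V_{t_{j+1}}\subseteq V_{t_j}$). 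Consequently $U=\{t_0,\ldots,t_\beta\}$ is an initial segment of $P$.

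The crux is to show that any two consecutive members $t_j,t_{j+1}$ of $U$ force the off-path child $s_j$ of $t_j$ to be a reticulation. Suppose instead that $s_j$ is a tree vertex or leaf; then $t_j$ is its unique parent, and a tree path from $s_j$ reaches some leaf $\ell'\in V_{s_j}\subseteq V_{t_j}=V_{t_{j+1}}$. Prefixing this tree path with any root-to-$t_j$ path yields a root-to-$\ell'$ path that passes through $s_j$ while avoiding $t_{j+1}$, whose only parent is $t_j$; this contradicts $\ell'\in V_{t_{j+1}}$. Hence $s_j$ is a reticulation whenever $t_j,t_{j+1}\in U$.

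Finally, suppose $|U|\ge 3$, so $t_0,t_1,t_2\in U$ with $t_0,t_1$ tree vertices. By the previous step the off-path children $s_0$ of $t_0$ and $s_1$ of $t_1$ are both reticulations. Then $t_0$ is a tree vertex whose two children are the reticulation $s_0$ and the tree vertex $t_1$, and $t_1$ is a parent of the reticulation $s_1$; this is precisely the near-sibling configuration for $s_0$ and $s_1$, contradicting that $N_1'$ has no near reticulations. Hence $|U|\le 2$. When $|U|=2$ and $u=t_0$ is a tree vertex, then $U=\{t_0,t_1\}$, so $u'=t_1$ is the tree-path child of $u$, and the reticulation-forcing step applied to $t_0,t_1$ shows the remaining child $s_0$ of $u$ is a reticulation, yielding the asserted structure. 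I expect the main obstacle to be the reticulation-forcing step together with correctly matching a three-term subchain to the definition of near-sibling reticulations; everything else is bookkeeping with visibility sets along $P$.
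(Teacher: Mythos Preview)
Your argument is largely correct and close in spirit to the paper's, but one case is missing. In your final step you assert that ``$t_0,t_1,t_2\in U$ with $t_0,t_1$ tree vertices'', yet nothing you have proved forces $u=t_0$ to be a tree vertex: the initial vertex of a tree path may well be a reticulation. If $u$ is a reticulation it has no off-path child $s_0$, so the near-sibling configuration between $s_0$ and $s_1$ you invoke simply does not exist. The patch is immediate and uses the other half of the no-near-reticulation hypothesis: when $u$ is a reticulation and $|U|\ge 3$, apply your reticulation-forcing step to the pair $t_1,t_2\in U$ (here $t_1$ \emph{is} a tree vertex, being an interior vertex of the tree path) to conclude that the off-path child $s_1$ of $t_1$ is a reticulation; then the reticulation $u$ has tree-vertex child $t_1$, which in turn is a parent of the reticulation $s_1$, so $u$ and $s_1$ are near-stack reticulations in $N_1'$, a contradiction. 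The paper's proof handles the two possibilities for $u$ explicitly and uses the near-stack configuration in exactly this way.

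Apart from this omission, your tree-path framing --- showing that $U$ is a chain, that it sits inside a single tree path descending from $u$, and that it forms an initial segment of that path --- is a tidy reorganisation of the paper's argument. The paper instead fixes an arbitrary $u'\in U\setminus\{u\}$, takes any directed path from $u$ to $u'$, and argues directly that this path has length one (so every element of $U\setminus\{u\}$ is a child of $u$), after which a children-count finishes. The core mechanism is identical in both: the off-path child at the relevant step must be a reticulation, and this produces a forbidden near-sibling or near-stack pair.
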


If $|U|=1$, then (\ref{most2}) is proved. Assume that $|U|\ge 2$, and let $u'\in U-\{u\}$. If $u'$ is not a descendant of $u$, then, by the choice of $u$, for each leaf $\ell\in V_u$, there is a path in $N'_1$ from the root to $\ell$ traversing $u$ but not $u'$, a contradiction. Thus $u'$ is a descendant of $u$. Let $P=u, v_1, v_2, \ldots, v_k, u'$ be a (directed) path from $u$ to $u'$ in $N'_1$.
If there is a tree path from $u$ to a leaf $m$ that avoids traversing $v_1$, then, by the choice of $u$, we have $m\in V_u$ but $m\not\in V_{u'}$, a contradiction. Therefore if $u$ is a tree vertex, it is the parent of a reticulation. In turn, this implies that, regardless of whether $u$ is a tree vertex or a reticulation, $v_1$ is not a reticulation as $N'_1$ is normal. So $v_1$ is a tree vertex. Let $w_1$ be the child of $v_1$ that is not $v_2$. If $w_1$ is a tree vertex, then there is a tree path from $v_1$ to a leaf $m_1$ that avoids traversing $v_2$, and so $m_1\in V_u$ but $m_1\not\in V_{u'}$, a contradiction. Thus $w_1$ is a reticulation. If $u$ is a reticulation, $u$ and $w_1$ are near-stack reticulations, while if $u$ is a tree vertex, $w_1$ and the child of $u$ that is not $v_1$ are near-sibling reticulations, a contradiction. Hence $v_1=u'$. 

It now follows that if $u$ is a reticulation, then $|U|=2$. Say $u$ is a tree vertex. Let $v$ be the child of $u$ that is not $u'$ and let $m$ be a leaf at the end of a tree path starting at $v$. If $v$ is a tree vertex, then $m\in V_u$ but $m\not\in V_{u'}$, a contradiction. So $v$ is a reticulation. As $N'_1$ has no shortcuts, $m\not\in V_u$ but $m$ is an element of the visibility set of $v$. Thus $v\not\in U$, so $|U|=2$ and (\ref{most2}) holds.

If $u$ is a reticulation, then, by (\ref{most2}), $U=\{u, u'\}$ and $f_1$ adjoins to either one of the arcs directed into $u$, in which case $u$ and $p_b$ are sibling reticulations of $N_1$, or the arc $(u, u')$, in which case $u$ and $p_b$ are near-stack reticulations of $N_1$, a contradiction. Thus $u$ is not a reticulation, and so $u$ is a tree vertex. If $|U|=2$, then, by (\ref{most2}), $f_1$ adjoins to either the arc directed into $u$ or the arc $(u, u')$. In both instances, $p_b$ and the reticulation child of $u$ are near-sibling reticulations of $N_1$, another contradiction. Hence $|U|=1$, and so there is exactly one arc to adjoin $f_1$, namely, the arc directed into $u$. We conclude that $N_1\cong N_2$, thereby completing the proof of Theorem~\ref{main}.
\end{proof}

\section{Discussion}
\label{discussion}

The capability to reliably reconstruct an explicit phylogenetic network from a set of genetic sequences is a key missing element preventing their widespread use in biological applications. Acquiring this capability involves two logical steps.  

The first step is obtaining results that show phylogenetic networks \emph{can} be reconstructed given particular information (and preferably giving a procedure for doing so).  The second step is being able to tell when one has the right information in the first place.  For instance, the present paper has shown that a certain class of phylogenetic networks can be reconstructed from the sets of rooted triples that they display.  But it hasn't give a straightforward method to determine whether a given set of rooted triples might be displayed by a network of that class.  

Another way to view these two steps is in terms of mathematical maps.  In showing that a class $\mathcal N$ of phylogenetic networks can be reconstructed from the rooted triples it displays, we are effectively showing that the map from $\mathcal N$ to the power set of rooted triples is injective.  In doing that, we also show how to reconstruct the pre-image of the set of rooted triples.  The second step would be to describe the image of this map, so that we have a bijection between the class of normal networks without near reticulations, and a well-described subset of the power set of the rooted triples.

The second step is crucial for applications: a scientist who gathers a set of genetic sequences does not \emph{a priori} know whether their data evolved on a normal network without near reticulations.  Ideally, they would be able to compute the set of rooted triples represented by their data, and be able to tell from those rooted triples whether they have the capability to reconstruct.  With the results in this paper, the test for whether a given set of rooted triples is the (complete) set of rooted triples displayed by a normal network, $N$ say, with no near reticulations, would involve finding a cherry or a set $W_b$ among the rooted triples, performing a cherry or reticulated cherry reduction, and repeating until it were no longer possible.  A more practical test, and one not requiring the complete set of rooted triples displayed by $N$, would be better.

In this paper, we have shown that normal networks without near reticulations can be reconstructed from the set of rooted triples they display. Thus, excluding near-sibling and near-stack reticulations is \emph{sufficient} for reconstruction, but are these exclusions also \emph{necessary} for reconstruction? As illustrated in \cite[Figure~1]{linz2020caterpillars}, it is not possible for \emph{all} normal networks to be reconstructed from their rooted triples. {In particular, Lemma~\ref{l:near-sib.retics.same.triples} shows that, if $N$ is a normal network with a pair of near-sibling reticulations $u$ and $v$, and $u$ and $v$ are non-comparable, then $N$} cannot be distinguished by rooted triples alone. Hence, excluding {non-comparable} near-sibling reticulations is \emph{necessary} for reconstruction. {This leaves open two questions: (i) To what extent is excluding comparable near-sibling reticulations necessary for reconstructing normal networks and (ii) is excluding near-stack reticulations necessary for reconstruction? For (ii), if $N$ is a normal network and $N$ has no near-sibling reticulations, we have Conjecture~\ref{c:near.sib.enough}. However, if $N$ is allowed to have comparable near-sibling reticulations, then the conjecture is false as the following example highlights.}

{Consider the normal network $N$ shown in Figure~\ref{f:intertwined}(a) and observe that $u$ and $v$ are near-sibling as well as near-stack reticulations. Using an approach similar to that to prove Lemma~\ref{l:near-sib.retics.same.triples}, it is straightforward to show that $R(N)=R(N')$, where $N'$ is the normal network shown in Figure~\ref{f:intertwined}(b), and $N$ is not isomorphic to $N'$. Additionally, this example highlights the potential difficulties in answering (i). What if the arc from $v$ to $q_u$ in $N$ is a directed path, and so $u$ and $v$ are no longer near-stack reticulations? Initial investigations show that the answer to (i) is dependent on how the rest of $N$ interacts with this directed path, and it is entirely possible that the answer to (i) is not succinct.}

\begin{figure}[ht]
\begin{tikzpicture}[sdot/.style={circle,fill,radius=1pt,inner sep=1pt}]
\node[sdot,label={left:{$u$}}] (u) {}; 
\node[sdot,above left=10mm of u,label={left:{$q_u$}}] (qu) {}; 
\node[sdot,above right=10mm of u,label={right:{$p_u$}}] (pu) {}; 
\node[sdot,above=10mm of qu,label={right:{$v$}}] (v) {}; 
\node[sdot,above left=10mm of v,label={left:{$q_v$}}] (qv) {}; 
\node[sdot,above right=10mm of v,label={right:{$q_v$}}] (pv) {}; 
\node[above=6mm of qv] (qvp) {}; 
\node[above=6mm of pv] (pvp) {}; 
\node[sdot,below left=6mm of qv, label={right:{$w_1$}}] (qvc) {}; 
\node[left= 6mm of qvc] (qvc1) {}; 
\node[below=6mm of qvc] (qvc2) {}; 
\node[sdot,below left=6mm of qu, label={right:{$w_2$}}] (quc) {}; 
\node[left= 6mm of quc] (quc1) {}; 
\node[below=6mm of quc] (quc2) {}; 
\node[sdot,below right=6mm of pu, label={left:{$w_4$}}] (puc) {}; 
\node[below= 6mm of puc] (puc1) {}; 
\node[right= 6mm of puc] (puc2) {}; 
\node[sdot,below=6mm of u, label={left:{$w_3$}}] (uc) {}; 
\node[below left= 6mm of uc] (uc1) {}; 
\node[below right= 6mm of uc] (uc2) {}; 

\draw (uc1)--(uc)--(uc2)
      (puc1)--(puc)--(puc2)
      (quc1)--(quc)--(quc2)
      (qvc1)--(qvc)--(qvc2)
      (quc)--(qu)--(u)--(uc)
      (u)--(pu)--(puc)
      (pu)--(pv)--(pvp)
      (pv)--(v)--(qv)--(qvp)
      (qv)--(qvc);
\draw (qu)--(v);

\node[below=15mm of u] () {(a) $N$};

\begin{scope}[xshift=65mm]
\node[sdot,label={left:{$u'$}}] (u') {}; 
\node[sdot,above left=10mm of u',label={left:{$q_v$}}] (qv) {}; 
\node[sdot,above right=10mm of u',label={right:{$p_{u'}$}}] (pu') {}; 
\node[sdot,below left=6mm of qv, label={right:{$w_1$}}] (qvc) {}; 
\node[left= 6mm of qvc] (qvc1) {}; 
\node[below=6mm of qvc] (qvc2) {}; 
\node[sdot,below right=6mm of pu', label={left:{$w_3$}}] (pu'c) {}; 
\node[below= 6mm of pu'c] (pu'c1) {}; 
\node[right= 6mm of pu'c] (pu'c2) {}; 
\node[sdot,below=6mm of u', label={right:{$w_2$}}] (u'c) {}; 
\node[below left= 6mm of  u'c] (u'c1) {}; 
\node[below right= 6mm of u'c] (u'c2) {}; 

\node[sdot,above=10mm of pu',label={right:{$v'$}}] (v') {}; 
\node[sdot,above left=10mm of v',label={left:{$q_{v'}$}}] (qv') {}; 
\node[sdot,above right=10mm of v',label={right:{$p_u$}}] (pu) {}; 
\node[above=6mm of qv'] (qv'p) {}; 
\node[above=6mm of pu] (pup) {}; 
\node[sdot,below right=6mm of pu, label={left:{$w_4$}}] (puc) {}; 
\node[below= 6mm of puc] (puc1) {}; 
\node[right=6mm of puc] (puc2) {}; 

\draw (u'c1)--(u'c)--(u'c2)
      (pu'c1)--(pu'c)--(pu'c2)
      (puc1)--(puc)--(puc2)
      (qvc1)--(qvc)--(qvc2)
      (qvc)--(qv)
      (u')--(u'c)
      (u')--(pu')--(pu'c)
      (pu')--(v')--(pu)--(puc)
      (pup)--(pu)--(v')--(qv')--(qv'p)
      (pu)--(puc)
      (qv)--(qv');

\draw (qv)--(u');

\node[below=15mm of u'] () {(b) $N'$};
\end{scope}
\end{tikzpicture}
\caption{
(a) A normal network $N$, where the reticulations $u$ and $v$ are both near-sibling and near-stack reticulations. (b) A normal network with $R(N)=R(N')$.
}
\label{f:intertwined}
\end{figure}
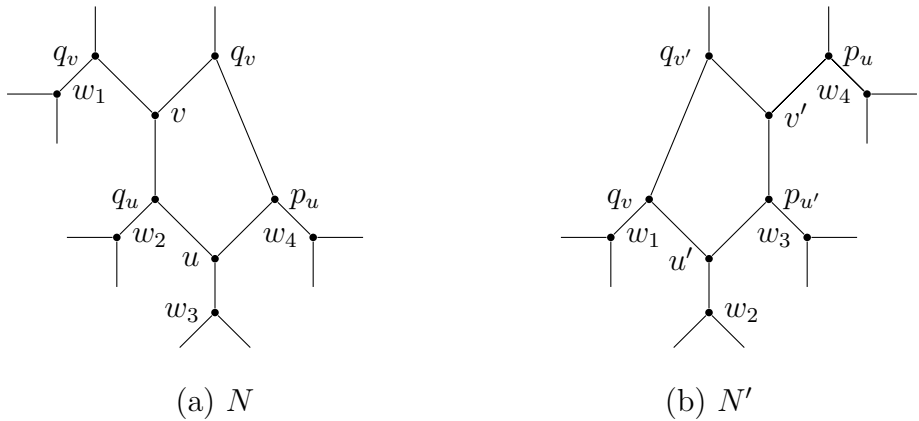

\section*{Data Availability Statement}
This manuscript has no associated data.

\bibliographystyle{plain}

\begin{thebibliography}{10}

\bibitem{bininda2004evolution}
Olaf~RP Bininda-Emonds.
\newblock The evolution of supertrees.
\newblock {\em Trends in Ecology \& Evolution}, 19(6):315--322, 2004.

\bibitem{bordewich2018recovering}
Magnus Bordewich, Katharina~T Huber, Vincent Moulton, and Charles Semple.
\newblock {Recovering normal networks from shortest inter-taxa distance
  information}.
\newblock {\em Journal of Mathematical Biology}, 77(3):571--594, February 2018.

\bibitem{bordewich2016determining}
Magnus Bordewich and Charles Semple.
\newblock Determining phylogenetic networks from inter-taxa distances.
\newblock {\em Journal of Mathematical Biology}, 73(2):283--303, 2016.

\bibitem{cardona2009comparison}
Gabriel Cardona, Francesc Rossell{\'o}, and Gabriel Valiente.
\newblock Comparison of tree-child phylogenetic networks.
\newblock {\em IEEE/ACM Transactions on Computational Biology and
  Bioinformatics}, 6(4):552--569, 2009.

\bibitem{francis2025normal}
Andrew Francis.
\newblock ``{N}ormal'' phylogenetic networks may be emerging as the leading
  class.
\newblock {\em Journal of Theoretical Biology}, page 112236, 2025.

\bibitem{Gambette2012}
Philippe Gambette and Katharina~T. Huber.
\newblock On encodings of phylogenetic networks of bounded level.
\newblock {\em Journal of Mathematical Biology}, 65(1):157--180, Jul 2012.

\bibitem{linz2020caterpillars}
Simone Linz and Charles Semple.
\newblock Caterpillars on three and four leaves are sufficient to reconstruct
  binary normal networks.
\newblock {\em Journal of Mathematical Biology}, 81:961--980, 2020.

\bibitem{semple2003phylogenetics}
Charles Semple and Mike Steel.
\newblock {\em Phylogenetics}.
\newblock Oxford University Press, 2003.

\bibitem{willson2010properties-of-n}
S.~J. Willson.
\newblock Properties of normal phylogenetic networks.
\newblock {\em Bulletin of Mathematical Biology}, 72(2):340--358, 2010.

\bibitem{willson2008reconstruction}
Stephen~J Willson.
\newblock Reconstruction of certain phylogenetic networks from the genomes at
  their leaves.
\newblock {\em Journal of Theoretical Biology}, 252(2):338--349, 2008.

\end{thebibliography}

\end{document}